\newtheorem{theorem}{Theorem}[section]
\newtheorem{lemma}[theorem]{Lemma}
\newtheorem{proposition}[theorem]{Proposition}
\newtheorem{corollary}[theorem]{Corollary}
\theoremstyle{definition}
\theoremstyle{remark}
\newtheorem{remark}[theorem]{Remark}
\numberwithin{equation}{section}
\begin{document}
\setcounter{page}{1}

\title[Amenability and harmonic $L^p$-functions  on hypergroups ]{Amenability and harmonic $L^p$-functions  on hypergroups }

\author[M. Nemati and J. Sohaei]{ Mehdi Nemati$^1$ and Jila Sohaei$^2$}
\address{$^{1}$Department of Mathematical Sciences,
     Isfahan Uinversity of Technology, Isfahan 84156-83111, Iran;
     \newline
 School of Mathematics,
      Institute for Research in Fundamental Sciences (IPM),
      P.O. Box: 19395--5746, Tehran, Iran.}
\email{\textcolor[rgb]{0.00,0.00,0.84}
{m.nemati@cc.iut.ac.ir}}
\address{$^{2}$ Department of Mathematical Sciences,
	Isfahan Uinversity of Technology,
	Isfahan 84156-83111, Iran}
\email{\textcolor[rgb]{0.00,0.00,0.84}{j.sohaei@math.iut.ac.ir}}



\subjclass[2010]{43A62, 43A15, 43A07, 45E10.}

\keywords{Amenability, hypergroup, harmonic function, Liouville property.}

\begin{abstract}
Let $K$
be  a locally compact hypergroup with a left invariant  Haar measure. We show that the Liouville property and amenability are equivalent for $K$ when it is second countable. Suppose that $\sigma$ is a non-degenerate probability measure on $K$, we show that  there is no non-trivial
$\sigma$-harmonic function which is continuous and vanishing at infinity. Using this, we prove that  the space $H_\sigma^p(K)$ of all $\sigma$-harmonic $L^p$-functions, is trivial for all $1\leq p<\infty$.  Further,  it is shown that $H_\sigma^\infty(K)$ contains only constant functions  if and only if it is a subalgebra of $L^\infty(K)$.   In the case where $\sigma$ is adapted and $K$ is compact, we show that $H_\sigma^p(K)={\mathbb C}1$ for all $1\leq p\leq\infty$.
\end{abstract} \maketitle

\section{Introduction}
Let $\sigma$ be  a complex  Borel measure on  a locally compact group $G$. A
Borel function $f$ on $G$ 
is called {${\sigma}$-harmonic} if   it satisfies the convolution equation $\sigma*f=f$. 
It is a well-known result
of \cite{cd1} that if $G$ is abelian, then the only
bounded continuous $\sigma$-harmonic function are
constant functions when the support of $\sigma$ generates a dense subgroup of $G$. Bounded  harmonic functions have been investigated
by several authors for various kinds of groups, e.g., nilpotent
groups  and compact groups \cite{ch, cl, dm, jw, joh}.  Moreover, it was shown in \cite{chu3} that for $1\leq p<\infty$, any $\sigma$-harmonic $L^p$-function associated to an
adapted probability measure $\sigma$ on a locally compact group $G$  is trivial. 
Harmonic functions on groups play  important roles in analysis, geometry and probability
theory \cite{chu}.  

Motivated
by these observations,  bounded continuous harmonic functions on nilpotent, [IN] and  central hypergroups
have been studied in \cite{ac2, ac}.

In what follows, $K$  denotes a locally  compact hypergroup with a left-invariant Haar measure. The purpose
of this paper is to obtain some insight into the harmonic functions problem for
the $L_p$-spaces, $1\leq p\leq \infty$, of $K$.


In Section 3, for given a complex  Borel measure  $\sigma$ on  $K$  with $\|\sigma\|=1$, we first show that there is a contractive projection from $L^p(K)$, $1< p\leq \infty$,  onto $H^p_\sigma(K)=\{f\in L^p(K): \sigma*f=f\}$.
We also show that  $K$ is necessarily amenable if it has the Liouville property; that is, there exists a  probability measure $\sigma$ on  $K$  such that all $\sigma$-harmonic $L^\infty$-functions on $K$
are constant. Further, we prove that a second countable hypergroup possesses the Liouville property if and only if it is amenable.

In Section 4, for the case that $\sigma$ is a non-degenerate probability measure on $K$, we show that the space of all $\sigma$-harmonic functions which are continuous and vanishing at infinity are trivial. Using this we prove that  for $1\leq p<\infty$, any $\sigma$-harmonic $L^p$-function  is trivial.  For such a measure $\sigma$, we also prove that $H_\sigma^\infty(K)$ is a subalgebra of $L^\infty(K)$ if and only if $H_\sigma^\infty(K)=\mathbb{C}1$. In the case where $\sigma$ is adapted and $K$ is compact, we show that $H_\sigma^p(K)={\mathbb C}1$ for all $1\leq p\leq\infty$. These extend
the results for the group case in \cite{chu3}.

\section{Preliminaries}
\noindent  Let $K$ be a locally compact Hausdorff space. The space $K$ is a hypergroup
if there exists  a bilinear, associative, weakly
continuous convolution $*$ on the Banach space ${ M}(K)$ of all bounded regular complex valued
Borel measures on $K$,  such that $({ M}(K), *)$ is an algebra and satisfies,
for $x,y\in K$,

(i) $\delta_x*\delta_y$ is a probability measure on $K$ with compact support,

(ii) the mapping $K\times K\rightarrow {\mathcal C}(K)$ , $(x, y)\mapsto {\rm supp}(\delta_x*\delta_y)$ is continuous with respect to the Michael
topology on the space ${\mathcal C}(K)$ of nonvoid compact sets in $K$,

(ii) the mapping $K\times K\rightarrow { M}(K)$, $(x, y)\mapsto \delta_x*\delta_y$ is continuous,

(ii) there is an identity $e\in K$ with $\delta_e*\delta_x=\delta_x*\delta_e=\delta_x$, 

(iv)there is a continuous involution $\check{}$ on $K$ such that
$(\delta_x*\delta_y)\check{}=\delta_{\check{y}}*\delta_{\check{x}}$
and $e\in {\rm supp}(\delta_x*\delta_y)$ if and only if $x=\check{y}$. The image measure of $\mu\in{ M}(K)$ under
such involution is denoted by
$\check{\mu}$.

Given a (complex) Borel function $f$ on $K$
and $x, y\in K$ the left translation $_xf$ and the right translation $f_y$  are
defined by
$$
_xf(y)=f_y(x)=\int_K f(t) d(\delta_x*\delta_y)(t)=f(x*y),
$$
if the integral exists, where $f(x*y)=\int_Kf d(\delta_x*\delta_y)$. For a  Borel function $f$ on $K$ the Borel function $\check{f}$ is defined by $\check{f}(x)=f(\check{x})$ for all $x\in K$. Given $\mu, \nu\in { M}(K)$, their convolution is given by
$$
\langle \mu*\nu, f\rangle=\int_Kf~d(\mu*\nu)=\int_K\int_Kf(x*y)~d\mu(x)~d\nu(y)\quad(f\in C_0(K))
$$
and $\|\mu*\nu\|\leq \|\mu\|\nu\|$ which shows that $\mu*\nu\in { M}(K).$
Also for a measure $\mu\in M(K)$ and a  Borel function $f$ on $K$, we define the convolutions
$\mu*f$ and $f*\mu$ by 
\begin{eqnarray*}
	\mu*f(x)=\int_K f(\check{y}*x)~d\mu(y), \quad f*\mu(x)=\int_K f(x*\check{y})~d\mu(y)\quad(x\in K),
\end{eqnarray*}
if the integrals exist. 
Note that in this case $(\mu*f)\check{}=\check{f}*\check{\mu}$. Moreover, if $f$ is  in $C_b(K)$, the Banach space of bounded complex
continuous functions on $K$, then $\mu*f$ and $f*\mu$ are in $C_b(K)$ with $\|\mu*f\|_\infty\leq\|\mu\|\|f\|_\infty$  and $\langle\mu*\nu, f\rangle=\langle\nu, \check{\mu}*f\rangle$. We refer the reader to \cite{bloom} for details of hypergroups.

\section{Amenability and Liouville property}
Throughout  of this paper, let $K$ be a locally
compact hypergroup with a left-invariant Haar measure $\omega$; that is, a non-zero positive Radon measure on $K$ such that
$$
\delta_x*\omega=\omega\quad(x\in K).
$$
Let $C_0(K)$ be the Banach space of complex
continuous functions on $K$ vanishing at infinity. Then its dual identifies, via the
Riesz representation theorem, with the space $M(K)$.
Let $L^p(K)$ be the complex
Lebesgue spaces with respect to $\omega$, for $1\leq p\leq\infty$. Given a Borel measure $\sigma$ on a hypergroup $K$, a
Borel function $f$ on $K$ satisfying the convolution equation
$$
\sigma*f=f
$$
is called {\it ${\sigma}$-harmonic}. For $1\leq p\leq\infty$ define $H_\sigma^p(K)$ to be the set of all ${\sigma}$-harmonic $L^p$-functions; that is, $H_\sigma^p(K)=\{f\in L^p(K): \sigma*f=f\}$.
For Borel functions $f$ and  $g$ at
least one of which is $\sigma$-finite, define the convolution $f*g$   on $K$ by
$$(f*g)(x)=\int_Kf(y)g(\check{y}*x)d \omega(y).$$
We commence with the following lemma whose proof is similar  to those given in \cite{chu3}. For completeness, we present the argument here. 
\begin{lemma}\label{th0}
	Let $\sigma\in M(K)$ with $\|\sigma\|=1$ and let $1< p\leq \infty$. Then there is a contractive projection $P_\sigma: L^p(K) \rightarrow L^p(K)$ with $P_\sigma(L^p(K))=H_\sigma^p(K)$. Moreover, if $1<p, q<\infty$ with $\frac{1}{p}+\frac{1}{q}=1$, then $P_\sigma$ is the dual map of the projection $P_{\check{\sigma}}: L^q(K) \rightarrow L^q(K)$.
	
\end{lemma}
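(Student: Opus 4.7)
The plan is to construct $P_\sigma$ by ergodic averaging of the convolution operator $T_\sigma \colon f \mapsto \sigma*f$ on $L^p(K)$. First I would check that $T_\sigma$ is a contraction: using Fubini, the left-invariance of $\omega$, and the identity $\langle \mu*\nu,f\rangle=\langle\nu,\check{\mu}*f\rangle$ recorded in the preliminaries, one gets the Young-type estimate $\|\sigma*f\|_p\le\|\sigma\|\,\|f\|_p=\|f\|_p$ for every $1\le p\le\infty$. The fixed-point subspace of $T_\sigma$ is by definition $H_\sigma^p(K)$, so it suffices to produce a contractive projection onto it.

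For $1<p<\infty$ I would apply the Yosida--Kakutani mean ergodic theorem to the Ces\`aro means
$$
A_n f=\frac{1}{n}\sum_{k=0}^{n-1}T_\sigma^k f=\frac{1}{n}\sum_{k=0}^{n-1}\sigma^k*f.
$$
Since $L^p(K)$ is reflexive and the orbit $\{T_\sigma^k f\}$ is bounded, the orbit is weakly precompact, and the theorem gives a norm limit $P_\sigma f=\lim_n A_n f$. The telescoping identity $\sigma*A_n f-A_n f=\tfrac{1}{n}(\sigma^n*f-f)$ has $L^p$-norm at most $\tfrac{2\|f\|_p}{n}$, which vanishes, so $P_\sigma f\in H_\sigma^p(K)$. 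Conversely, any $f\in H_\sigma^p(K)$ satisfies $A_n f=f$ for every $n$, hence $P_\sigma f=f$. This yields $P_\sigma^2=P_\sigma$ and $P_\sigma(L^p(K))=H_\sigma^p(K)$, while the contraction bound $\|A_n\|\le 1$ passes to the limit.

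The case $p=\infty$ is the delicate step, since reflexivity fails. Here I would fix a Banach limit $\mathrm{LIM}$ on $\ell^\infty(\mathbb{N})$ and define $P_\sigma f$ by $\langle P_\sigma f,g\rangle=\mathrm{LIM}_n\langle A_n f,g\rangle$ for $g\in L^1(K)$; this gives a well-defined $P_\sigma f\in L^\infty(K)$ of norm $\le\|f\|_\infty$ by weak$^*$-compactness of the ball. To see that $\sigma*P_\sigma f=P_\sigma f$, I would push the convolution through the limit via the duality $\langle\sigma*h,g\rangle=\langle h,\check{\sigma}*g\rangle$, combined with the same telescoping estimate to kill the error term. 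Idempotency and contractivity follow exactly as in the reflexive case.

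For the final duality claim with conjugate exponents $1<p,q<\infty$, the identity $\langle\sigma*f,g\rangle=\langle f,\check{\sigma}*g\rangle$ shows that $T_\sigma$ on $L^p(K)$ and $T_{\check{\sigma}}$ on $L^q(K)$ are Banach-space adjoints of one another, hence so are their Ces\`aro means. Both averaging sequences converge in norm by the reflexive case, so in the limit $P_\sigma=(P_{\check{\sigma}})^*$. The principal obstacle is the $p=\infty$ construction: without reflexivity one must rely on a non-canonical Banach limit, and the dual identity from the preliminaries is what allows harmonicity to survive in the weak$^*$-limit.
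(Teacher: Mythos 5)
Your construction is correct, and it is the same ergodic-averaging idea as the paper's: both take the Ces\`aro means $\frac{1}{n}\sum_k \sigma^k * f$ and extract a limit, using contractivity $\|\sigma*f\|_p\le\|\sigma\|\,\|f\|_p$ and the duality $\langle\sigma*f,g\rangle=\langle f,\check{\sigma}*g\rangle$. The difference is in the limiting device and in the duality argument. The paper treats all $1<p\le\infty$ uniformly by taking a weak$^*$ limit of the averages along a free ultrafilter on $\mathbb{N}$ (legitimate because $L^p(K)$ is a dual space for every such $p$), and then proves $P_\sigma=(P_{\check{\sigma}})^*$ by an indirect adjoint computation: it first checks $\sigma*P_\sigma(f)=P_\sigma(\sigma*f)$, deduces that $P_{\check{\sigma}}^*(f)$ is $\sigma$-harmonic, and concludes $P_{\check{\sigma}}^*(f)=P_\sigma P_{\check{\sigma}}^*(f)=P_\sigma(f)$. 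You instead split the range: for $1<p<\infty$ you invoke the mean ergodic theorem in the reflexive space $L^p(K)$ to get \emph{norm} convergence of the averages, and only at $p=\infty$ do you resort to a Banach limit (which is essentially the paper's ultrafilter limit in disguise). What your route buys is a stronger and cleaner statement in the reflexive range -- the projection is the canonical mean ergodic projection, independent of any ultrafilter, and the identity $P_\sigma=(P_{\check{\sigma}})^*$ drops out in one line from the mutual adjointness of the Ces\`aro means together with norm convergence on both sides -- at the cost of quoting the Yosida--Kakutani theorem and handling $p=\infty$ separately; the paper's single weak$^*$-limit construction is more self-contained and uniform, but its projection is a priori ultrafilter-dependent and its duality verification is longer. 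Your telescoping estimate and the Banach-limit verification of harmonicity at $p=\infty$ are both sound, so there is no gap.
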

\begin{proof}
	Let ${\mathcal U}$ be a free ultra-filter  on $\mathbb{N}$, and define 
	$
	P_\sigma: L^p(K) \rightarrow L^p(K)
	$
	by the weak$^*$ limit
	$$
	P_\sigma(f)=\lim_{\mathcal U}\frac{1}{n}\sum_{k=1}^n \sigma^k*f,
	$$
	where $\sigma^k$ is the $k$-times convolution of $\sigma$ with itself. It is easy to see that $P_\sigma(f)=f$ for all $f\in H_\sigma^p(K)$. Moreover, if $f\in L^p(K)$, then it is easily verified that $\sigma*P_\sigma(f)=P_\sigma(f)$ and so $P_\sigma(f)\in H_\sigma^p(K)$. These show that $P_\sigma^2=P_\sigma$ and $P_\sigma(L^p(K))=H_\sigma^p(K)$.
	
	Suppose now that $1<p<\infty$. Then it is not hard to check that $\sigma*P_\sigma(f)=P_\sigma(\sigma*f)$ for all $f\in L^p(K)$. 
	Therefore, for each $g\in L^q(K)$, we have
	\begin{eqnarray*}
		\langle \sigma* P_{\check{\sigma}}^*(f), g\rangle&=&\langle  P_{\check{\sigma}}^*(f), \check{\sigma}*g\rangle=
		\langle f, P_{\check{\sigma}}(\check{\sigma}*g)\rangle\\
		&=&\langle f, \check{\sigma}* P_{\check{\sigma}}(g)\rangle
		=\langle f, P_{\check{\sigma}}(g)\rangle\\
		&=&\langle P_{\check{\sigma}}^*(f), g\rangle.
	\end{eqnarray*}
	This shows that $\sigma* P_{\check{\sigma}}^*(f)=P_{\check{\sigma}}^*(f)$ for all $f\in L^p(K)$. Similarly, we can show that $\check{\sigma}* P_{{\sigma}}^*(g)=P_{{\sigma}}^*(g)$ for all $g\in L^q(K)$. Consequently, for each $f\in L^p(K)$ and $g\in L^q(K)$, we have
	\begin{eqnarray*}
		\langle P_\sigma P_{\check{\sigma}}^*(f), g\rangle=\langle  P_{\check{\sigma}}P_\sigma^*(g), f\rangle&=&
		\langle  P_{{\sigma}}^*(g), f\rangle\\
		&=&\langle  P_{{\sigma}}(f), g\rangle.
	\end{eqnarray*}
	This shows that $P^*_{\check{\sigma}}(f)=P_\sigma P_{\check{\sigma}}^*(f)=P_\sigma(f)$ for all $f\in L^p(K)$, as required.
\end{proof}

\begin{remark}\label{th2}
	Let $\sigma\in M(K)$ with $\|\sigma\|=1$ and let $1< p, q< \infty$ be such that $\frac{1}{p}+\frac{1}{q}=1$. Then we have linear
	isometric isomorphisms $H_\sigma^p(K)\cong L^p(K)/H_{\check{\sigma}}^q(K)^\perp\cong H_{\check{\sigma}}^q(K)^*$, where the first isometry is given by $H_\sigma^p(K)\ni f\mapsto f+H_{\check{\sigma}}^q(K)^\perp\in L^p(K)/H_{\check{\sigma}}^q(K)^\perp$. Indeed, for each $f\in H_\sigma^p(K)$, we have
	\begin{eqnarray*}
		\|f\|\geq \inf \{\|f+g\|: g\in H_{\check{\sigma}}^q(K)^\perp\}&\geq& \inf\{\|P_\sigma(f+g)\|: g\in H_{\check{\sigma}}^q(K)^\perp\}\\
		&\geq&
		\inf\{\|f\|: g\in H_{\check{\sigma}}^q(K)^\perp\}= \|f\|.
	\end{eqnarray*}
\end{remark}

Recall that the hypergroup $K$ is called  amenable if there exists a topological left invariant mean
on $L^\infty(K)$; that is, there exists $m\in L^\infty(K)^*$ such that $\|m\|=m(1)=1$ and $m(g*f)=(\int_K g~d\omega)m(f)$ for all $f\in L^\infty(K)$ and $g\in L^1(K)$. A topological right invariant
mean on $L^\infty(K)$ is a functional $m\in L^\infty(K)^*$ such that $\|m\|=m(1)=1$ and $m(f*\check{g})=(\int_K g~d\omega)m(f)$ for all $f\in L^\infty(K)$ and $g\in L^1(K)$.
It is known that the involution on $L^1(K)$ can be canonically extended to a
linear involution $\star$ on $L^1(K)^{**}$; see \cite[Chapter 2]{dl}. Clearly, $m\in L^\infty(K)^*$
is a topological left invariant mean if and only if $m^\star$ is a topological right invariant mean. Therefore, the existence of a topological right invariant mean on $L^\infty(K)$ is equivalent to $K$ being amenable.

\begin{theorem}\label{ame}
	Let $K$ be a hypergroup with the Liouville property; that is, there exists a probability measure $\sigma$ on $K$ such that $H_\sigma^\infty(K)=\mathbb{C}1$. Then $K$ is amenable.
\end{theorem}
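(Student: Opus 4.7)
The plan is to use Lemma~\ref{th0} in the case $p=\infty$ together with the Liouville hypothesis to produce a mean on $L^\infty(K)$, and then to verify its topological right invariance by combining associativity of hypergroup convolution with weak$^*$-continuity of right convolution.

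First, Lemma~\ref{th0} furnishes a contractive projection $P_\sigma:L^\infty(K)\to H_\sigma^\infty(K)$. The Liouville hypothesis reduces this to $P_\sigma:L^\infty(K)\to\mathbb{C}1$, so there is a unique $m\in L^\infty(K)^*$ with $P_\sigma(f)=m(f)\cdot 1$ for every $f\in L^\infty(K)$. Contractivity gives $\|m\|\le 1$, and since $\sigma$ is a probability measure $\sigma*1=1$, whence $P_\sigma(1)=1$ and $m(1)=1$. Hence $m$ is a mean.

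The key step is to show that $m$ is a topological right invariant mean; by the extension of the involution to $L^1(K)^{**}$ recalled just before the theorem, this gives a topological left invariant mean and hence the amenability of $K$. Fix $f\in L^\infty(K)$ and $g\in L^1(K)$. Associativity of hypergroup convolution gives $\sigma^k*(f*\check g)=(\sigma^k*f)*\check g$ for every $k$. Right convolution by $\check g$ is the Banach-space adjoint of a bounded operator on $L^1(K)$ and so is weak$^*$-weak$^*$ continuous on $L^\infty(K)$; therefore it commutes with the ultrafilter weak$^*$ limit defining $P_\sigma$. Averaging and passing to the limit,
\[
P_\sigma(f*\check g)
=\lim_{\mathcal U}\frac{1}{n}\sum_{k=1}^n(\sigma^k*f)*\check g
=\Bigl(\lim_{\mathcal U}\frac{1}{n}\sum_{k=1}^n\sigma^k*f\Bigr)*\check g
=P_\sigma(f)*\check g
=m(f)\,(1*\check g).
\]
A direct computation using the involution axiom $(\delta_{\check y}*\delta_x)\check{}=\delta_{\check x}*\delta_y$ and the left-invariance $\delta_z*\omega=\omega$ shows that $(1*\check g)(x)=\int_K g(\check x*y)\,d\omega(y)=\int_K g\,d\omega$ for every $x$. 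Substituting and comparing scalars yields $m(f*\check g)=\bigl(\int_K g\,d\omega\bigr)m(f)$, which is exactly topological right invariance.

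I expect the only real obstacle to be the two pieces of bookkeeping that make the displayed chain of equalities work: verifying cleanly, from the hypergroup axioms, that right convolution by $\check g$ is weak$^*$-continuous on $L^\infty(K)$, and that $1*\check g$ is the constant function $\int_K g\,d\omega$. Both are routine manipulations of left-invariance of $\omega$ and the involution axioms, but they are where the argument is specific to hypergroups rather than a cosmetic transcription of the group case \cite{chu3}.
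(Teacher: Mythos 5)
Your proposal is correct and follows essentially the same route as the paper: the projection $P_\sigma$ of Lemma~\ref{th0} collapses onto $\mathbb{C}1$, giving a mean $m$, and the identity $P_\sigma(f*\check g)=P_\sigma(f)*\check g$ together with $1*\check g=\bigl(\int_K g\,d\omega\bigr)1$ yields a topological right invariant mean, hence amenability via the involution on $L^1(K)^{**}$. The only differences are cosmetic: you obtain $\|m\|=1$ from contractivity rather than positivity of $P_\sigma$, and you spell out the weak$^*$-continuity of right convolution and the computation of $1*\check g$, which the paper leaves implicit.
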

\begin{proof}
	Let $P_\sigma: L^\infty(K) \rightarrow H_\sigma^\infty(K)$ be the contractive projection as defined in Lemma \ref{th0}. Then there is a unique functional $m\in L^\infty(K)^*$ such that $P_\sigma(f)=m(f)1$ for all $f\in L^\infty(K)$.
	Since $\sigma*(f*\check{g})=(\sigma*f)*\check{g}$ for all $f\in L^\infty(K)$ and $g\in L^1(K)$, it follows that $P_\sigma(f*\check{g})=P_\sigma(f)*\check{g}$.  Moreover, since  the projection $P_\sigma$ is positive and $P_\sigma(1)=1$, we conclude that $\|m\|=m(1)=1$. This shows that $m$ is a topological right invariant mean on $L^\infty(K)$, which implies that $K$ is amenable.
	
\end{proof}
For a a   locally compact hypergroup $K$ consider the  closed two sided ideal  $$L^1_0(K)=\left\{f\in L^1(K):~\int_Kf d\omega=0\right\}$$  in $L^1(K)$ and for each  $\sigma\in M(K)$  let $J_\sigma$ be the norm closure of $\{ f-\check{\sigma}*f:~f\in L^1(K)\}$ in $L^1(K)$. 
It is well known that
$L^1_0(K)$ has codimension one in $L^1(K)$ and if $\sigma$ is a probability measure, then  $J_\sigma\subseteq L^1_0(K)$.
Moreover, it is easy to see that $J_\sigma^\perp=\{f\in L^\infty(K): \sigma*f=f\}=H_\sigma^\infty(K)$ and hence $H^\infty_\sigma(K)=({L^1(K)}/{J_\sigma})^*$

We have the following lemma  whose proof is similar  to those given in  \cite[Lemma 1.1 and Remark 3,  p.210]{wil} for locally compact groups. 
Thus, we
omit  the proof.
\begin{lemma}\label{xb}
	Let $K$ be a   locally compact hypergroup  and ${\mathcal S}$ be a norm closed, convex
	subsemigroup of probability measuers on $K$. Let $I$ be a separable, closed
	subspace of $L^1(K)$ such that
	
	\emph{(i)} ${ J}_\sigma\subseteq I$ for every $\sigma\in{\mathcal S}$; and
	
	\emph{(ii)} for each $\varepsilon>0$ and $g\in I$ there is
	$\sigma\in{\mathcal S}$ such that $$d(g, { J}_\sigma)=\inf\{\|f-g\|:~f\in{J}_\sigma \}<\varepsilon.$$
	Then there is $\sigma\in {\mathcal S}$ such that $I={J}_\sigma$.
\end{lemma}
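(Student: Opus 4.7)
Our approach adapts the proof of \cite[Lemma 1.1 and Remark 3]{wil} (stated there for locally compact groups) to the hypergroup setting; the required structural features carry over unchanged, namely the duality $J_\sigma^\perp=H_\sigma^\infty(K)$, the norm continuity $\|\check\sigma_1 *f-\check\sigma_2 *f\|_1\le\|\sigma_1-\sigma_2\|_{M(K)}\|f\|_1$ on $L^1(K)$, and the convex, norm-closed, semigroup structure of $\mathcal{S}$.

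First I would use the separability of $I$ to fix a countable dense sequence $\{g_n\}_{n\ge 1}\subseteq I$ and, for each $n$, invoke (ii) to produce $\tau_n\in\mathcal{S}$ together with $u_n\in L^1(K)$ satisfying $\|g_n-(u_n-\check{\tau_n}*u_n)\|_1<2^{-n}$. Next I would build inductively a sequence $\sigma_n\in\mathcal{S}$ from $\sigma_{n-1}$ and $\tau_n$, using both the convex and the semigroup operations of $\mathcal{S}$, with weights chosen so that (a) $\|\sigma_n-\sigma_{n-1}\|_{M(K)}<2^{-n}$, ensuring that $(\sigma_n)$ is Cauchy in $M(K)$ and converges by norm-closedness of $\mathcal{S}$ to some $\sigma\in\mathcal{S}$; and (b) for every fixed $k$ one produces witnesses $v_k^{(n)}\in L^1(K)$ satisfying $\|g_k-(v_k^{(n)}-\check{\sigma_n}*v_k^{(n)})\|_1\to 0$ with $\sup_n\|v_k^{(n)}\|_1<\infty$.

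Once $\sigma$ is constructed, the uniform $L^1$-bound on $v_k^{(n)}$ combined with the norm continuity of convolution gives $\|g_k-(v_k^{(n)}-\check\sigma*v_k^{(n)})\|_1\to 0$, so $g_k\in J_\sigma$ for every $k$. Since $J_\sigma$ is closed in $L^1(K)$ and $\{g_n\}$ is dense in $I$, this forces $I\subseteq J_\sigma$; combined with (i) we obtain $I=J_\sigma$.

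The principal obstacle is condition (b) in the inductive construction. A naive convex combination $\sigma=\sum 2^{-n}\tau_n$ does not suffice, because the assignment $\tau\mapsto H_\tau^\infty$ is not monotone under convex averaging: on the free group $F_2$ one already has $H_\sigma^\infty\supsetneq H_{\tau_1}^\infty\cap H_{\tau_2}^\infty$ for $\sigma=\tfrac12(\tau_1+\tau_2)$, so $J_{\tau_i}\not\subseteq J_\sigma$ in general. The resolution, as in Willis, is to alternate convex and convolutional updates, exploiting the full semigroup structure of $\mathcal{S}$ (not merely convexity) in order to transport the $L^1$-witnesses from step $n-1$ to step $n$ without norm blowup; this controlled-norm transport is the technical heart of the argument and is exactly where the hypothesis that $\mathcal{S}$ is a \emph{semigroup} is used rather than merely a convex set.
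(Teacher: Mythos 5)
Your proposal takes essentially the same route as the paper: the authors give no argument at all, stating only that the proof is obtained by adapting \cite[Lemma 1.1 and Remark 3]{wil} to hypergroups, which is exactly the adaptation you outline, and the ingredients you single out ($J_\sigma^{\perp}=H_\sigma^\infty(K)$, the contractive estimate $\|\check{\sigma}*f\|_1\le\|\sigma\|\,\|f\|_1$, and the norm-closed convex semigroup structure of $\mathcal S$) are precisely what is needed for Willis's inductive construction to carry over verbatim. Your additional remark that a plain convex combination of the $\tau_n$ cannot work and that the semigroup (convolution) structure is essential is a correct and worthwhile observation that the paper does not spell out.
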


\begin{corollary}\label{cor}
	Let $K$ be a second countable locally compact hypergroup. Then the following conditions are equivalent.
	
	\emph{(i)} $K$ is amenable.
	
	\emph{(ii)} There exists a probability measure $\sigma$ on $K$ such that $L^1_0(K)=J_\sigma$.
	
	\emph{(iii)} $K$ has the Liouville property.
\end{corollary}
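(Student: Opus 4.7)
The plan is to establish the cycle \emph{(iii)} $\Rightarrow$ \emph{(i)} $\Rightarrow$ \emph{(ii)} $\Rightarrow$ \emph{(iii)}. The implication \emph{(iii)} $\Rightarrow$ \emph{(i)} is nothing but Theorem \ref{ame}. For \emph{(ii)} $\Rightarrow$ \emph{(iii)}, I would recall the identity $J_\sigma^\perp = H_\sigma^\infty(K)$ noted just before Lemma \ref{xb}, together with the fact that $L^1_0(K)$ is the codimension-one kernel of the integration functional $\int_K \cdot\, d\omega$ on $L^1(K)$, so that $L^1_0(K)^\perp = \mathbb{C}1$ inside $L^\infty(K)$. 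Annihilating both sides of the assumed equality $L^1_0(K) = J_\sigma$ then yields $H_\sigma^\infty(K) = \mathbb{C}1$.

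The substantive direction is \emph{(i)} $\Rightarrow$ \emph{(ii)}, which I would derive from Lemma \ref{xb} applied to $I = L^1_0(K)$ and to the set $\mathcal{S}$ of probability densities in $L^1(K)$, a norm-closed convex subsemigroup of the probability measures on $K$. Separability of $I$ follows from second countability of $K$. Condition \emph{(i)} of Lemma \ref{xb} reduces to the Fubini computation
\begin{equation*}
 \int_K \bigl(\check\sigma * f\bigr)\, d\omega \;=\; \int_K f\, d\omega,
\end{equation*}
valid for any probability measure $\sigma$ by left invariance of $\omega$; it follows that $f - \check\sigma * f \in L^1_0(K)$ and hence $J_\sigma \subseteq L^1_0(K)$.

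The crux is condition \emph{(ii)} of Lemma \ref{xb}: given $g \in L^1_0(K)$ and $\varepsilon > 0$, produce $\sigma \in \mathcal{S}$ with $d(g, J_\sigma) < \varepsilon$. My strategy is to extract from amenability of $K$ a Reiter-type property, namely a probability density $\sigma$ (depending on $g$) satisfying $\|\check\sigma^n * g\|_1 \to 0$. Granted such a $\sigma$, the telescoping identity
\begin{equation*}
 g - \bigl(f_N - \check\sigma * f_N\bigr) \;=\; \check\sigma^N * g, \qquad f_N \;:=\; \sum_{k=0}^{N-1} \check\sigma^k * g,
\end{equation*}
exhibits $g$ as being within $\|\check\sigma^N * g\|_1$ of $\{f - \check\sigma * f : f \in L^1(K)\} \subseteq J_\sigma$, and the right-hand side is eventually smaller than $\varepsilon$.

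The main obstacle will be the derivation of this Reiter-type decay from amenability in the hypergroup setting. In the group case this is classical (due to Willis and Rosenblatt); here one would transfer the argument by dualizing a topological right invariant mean on $L^\infty(K)$ via a Hahn--Banach/weak$^*$-approximation step, producing for any preassigned probability measure a net of probability densities that are norm-approximately invariant under convolution with it. A diagonal/sequential extraction---available because $L^1_0(K)$ is separable---then yields a single $\sigma \in \mathcal{S}$ whose iterates contract the particular $g$. All the algebraic manipulations required go through in the hypergroup setting using only the convolution identities recorded in the preliminaries.
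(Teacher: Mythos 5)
Your skeleton matches the paper's: (iii)$\Rightarrow$(i) is exactly Theorem \ref{ame}, your (ii)$\Rightarrow$(iii) is the same annihilator argument (using $J_\sigma^\perp=H_\sigma^\infty(K)$, $L^1_0(K)^\perp=\mathbb{C}1$ and the cycle closes the equivalence), and (i)$\Rightarrow$(ii) goes through Lemma \ref{xb} with $I=L^1_0(K)$, whose condition (i) you verify correctly. The gap is in your verification of condition (ii) of Lemma \ref{xb}. You reduce it to producing, for a given $g\in L^1_0(K)$, a \emph{single} probability measure $\sigma$ with $\|\check{\sigma}^{\,n}*g\|_1\to 0$, and you justify this only by ``dualizing a topological right invariant mean \dots\ a diagonal/sequential extraction then yields a single $\sigma$ whose iterates contract the particular $g$.'' That is not a proof: dualizing the mean (the Day--Namioka convexity argument, which for hypergroups is precisely \cite[Corollary 4.2]{skan}) gives a \emph{net} $(f_\alpha)$ of probability densities with $\|f_\alpha*f-f_\alpha\|_1\to 0$ for every probability density $f$, i.e.\ a different approximately invariant density for each tolerance; the existence of one fixed measure whose convolution \emph{powers} contract a given element of $L^1_0(K)$ is a genuinely stronger, Rosenblatt--Willis-type mixing statement, whose proof in the group case is a substantial argument and is not obtained by a diagonal extraction. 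Nothing in your sketch supplies it for hypergroups.

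Fortunately the detour is unnecessary, and your own telescoping identity shows why: taking $N=1$ already gives $d(g,J_\sigma)\le\|\check{\sigma}*g\|_1$, so condition (ii) of Lemma \ref{xb} only requires, for each $g\in L^1_0(K)$ and $\varepsilon>0$, \emph{some} probability measure (allowed to depend on $g$ and $\varepsilon$) with $\|\check{\sigma}*g\|_1<\varepsilon$. This follows at once from the Reiter-type property you were going to derive and which the paper simply cites: by \cite[Corollary 4.2]{skan} amenability yields a net $(f_\alpha)$ in $P_1(K)$ with $\|f_\alpha*f-f_\alpha\|_1\to 0$ for all $f\in P_1(K)$; writing (the real and imaginary parts of) $g\in L^1_0(K)$ as scalar multiples of differences of two probability densities gives $\|f_\alpha*g\|_1\to 0$, and since $g-f_\alpha*g=g-\sigma_\alpha*g\in J_{\check{\sigma}_\alpha}$ with $\sigma_\alpha=f_\alpha\omega$, one gets $d(g,J_{\check{\sigma}_\alpha})\to 0$. (A small bookkeeping point: the measures fed into Lemma \ref{xb} are then the $\check{\sigma}_\alpha$, so it is cleaner to take $\mathcal{S}$ to be all probability measures on $K$ rather than only densities, unless you check that the image of $f_\alpha\omega$ under the involution is again absolutely continuous.) With this replacement your argument becomes the paper's proof, and separability of $L^1(K)$, coming from second countability, lets Lemma \ref{xb} produce a single $\sigma$ with $L^1_0(K)=J_\sigma$.
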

\begin{proof}
	{(i)}$\Rightarrow${(ii)}.
	Suppose that  $K$  is amenable. Then by \cite[Corollary 4.2]{skan}, there is a net $(f_\alpha)$ in $P_1(K):=\{f\in L^1(K): \|f\|_1=\int_Kf~d\omega=1\}$ such that $$\|f_\alpha*f-f_\alpha\|_1\rightarrow 0$$ for all $f\in P_1(K)$. In particular, for each $g\in L^1_0(K)$ we have $\|f_\alpha*g\|_1\rightarrow 0$.  Moreover, $f_\alpha*g-g\in J_{\check{\sigma}_\alpha}$ for all $\alpha$, where $\sigma_\alpha=f_\alpha\omega$. This shows that the condition (ii) of Lemma \ref{xb} is
	satisfied. Since  $L^1(K)$ is separable, we give that $L^1_0(K)=J_\sigma$ for some probability measure $\sigma$ on $K$.
	
	{(iii)}$\Rightarrow${(i)}.  This follows from Theorem \ref{ame}.
	
	{(ii)}$\Leftrightarrow${(iii)}. This follows from the inclusion $J_\sigma\subseteq L^1_0(K)$ with the fact that $L^1_0(K)^\perp={\Bbb C}1$.
\end{proof}

\begin{proposition}\label{pos}
	Let $\sigma$ be a probability measure on $K$ and let $1<p\leq\infty$. Then  $H_\sigma^p(K)$ is generated by its non-negative elements.
\end{proposition}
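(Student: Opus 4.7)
The plan is to exploit the positivity of the projection $P_\sigma: L^p(K)\to H^p_\sigma(K)$ constructed in Lemma \ref{th0}, combined with the usual decomposition of a complex function into four non-negative pieces.

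First I would verify that $P_\sigma$ is a positive operator. Since $\sigma$ is a probability measure, for each non-negative $f\in L^p(K)$ the convolutions $\sigma^k*f$ are non-negative (because $\delta_{\check y}*\delta_x$ is a probability measure, so $f(\check y*x)\ge 0$ pointwise), and hence so are the Cesàro averages $\frac1n\sum_{k=1}^n \sigma^k*f$. The positive cone of $L^p(K)$ is norm-closed and convex, hence weakly closed for $1<p<\infty$, and for $p=\infty$ it is weak-$*$ closed since it equals the intersection of the half-spaces $\{g\in L^\infty(K):\int_K g\varphi\,d\omega\ge 0\}$ over $\varphi\in L^1(K)_+$. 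Passing to the weak (resp. weak-$*$) limit along $\mathcal U$ therefore yields $P_\sigma(f)\ge 0$.

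Next, given an arbitrary $f\in H^p_\sigma(K)$, I would decompose
\begin{equation*}
f = (f_1^+ - f_1^-) + i(f_2^+ - f_2^-),
\end{equation*}
where $f_1=\operatorname{Re} f$, $f_2=\operatorname{Im} f$, and $f_j^\pm$ are the positive and negative parts. Each $f_j^\pm$ is non-negative and bounded by $|f|$, hence lies in $L^p(K)$. By the positivity established above, $P_\sigma(f_j^\pm)\in H^p_\sigma(K)$ is non-negative. Since $P_\sigma$ is linear and $P_\sigma(f)=f$ (as $f\in H^p_\sigma(K)$), I obtain
\begin{equation*}
f = P_\sigma(f_1^+) - P_\sigma(f_1^-) + i\bigl(P_\sigma(f_2^+) - P_\sigma(f_2^-)\bigr),
\end{equation*}
which displays $f$ as a complex linear combination of four non-negative elements of $H^p_\sigma(K)$. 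This proves the proposition.

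The only genuine subtlety is the positivity of $P_\sigma$; I expect this to be the main point to justify carefully, since the argument depends on the cone of non-negative $L^p$-functions being closed under the weak (or weak-$*$) limit employed in the ultrafilter definition. Once this is in hand, the decomposition argument is routine.
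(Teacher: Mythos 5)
Your proof is correct and follows essentially the same route as the paper: both rest on the positivity of the projection $P_\sigma$ from Lemma \ref{th0} applied to a decomposition of $f$ into non-negative parts, together with $P_\sigma(f)=f$ for $f\in H^p_\sigma(K)$. The only differences are cosmetic — the paper first reduces to real functions via $\sigma*\overline{f}=\overline{f}$ and then splits into $f_+-f_-$, whereas you split directly into four non-negative pieces, and you spell out the positivity of $P_\sigma$ (weak/weak$^*$ closedness of the positive cone) which the paper merely asserts.
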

\begin{proof}
	Suppose that  $f\in H_\sigma^p(K)$. Then $\sigma*\overline{f}=\overline{(\sigma*f)}=\overline{f}$. This shows that $H_\sigma^p(K)$ is self-adjoint and consequently 
	is generated by its real function parts. Now let $f\in H_\sigma^p(K)$ be a real function and let $f=f_+-f_-$, where $f_+, f_-$ are non-negative functions  in $L^p(K)$. Since $\sigma$ is positive, the projection $P_\sigma$, as defined in Theorem \ref{th0}, is positive. It follows that $P_\sigma(f_+), P_\sigma(f_-)\in H_\sigma^p(K)$ are non-negative. Moreover, $f=P_\sigma(f)=P_\sigma(f_+)-P_\sigma(f_-)$, and this completes the proof.
\end{proof}

\section{Harmonic $L^p$-functions}
Let $\mu$ be  a complex  Borel measure on  a locally compact hypergroup $K$. We say that $\mu$ is non-degenerate if   $$K=\overline{\bigcup_{n=1}^\infty({\rm supp}|\mu|)^n}=\overline{\bigcup_{n=1}^\infty{\rm supp}|\mu|^n},$$
where $|\mu|^n$ is the $n$-fold convolution of $|\mu|$ and ${\rm supp}|\mu|^n$ equals the closure of $({\rm supp}|\mu|)^n$. If $\mu$ satisfies the weaker condition that 
$$K=\overline{\bigcup_{n=1}^\infty{\Big(}{\rm supp}|\mu|\cup ({{\rm supp}|\mu|})\check{}{\Big)}^n}$$
then  we say that $\mu$ is adapted. 
\begin{remark}\label{rem}
	{\rm 
		Let $\mu\in M(K)$. Then it is not hard to check that  non-degeneracy of $\mu$ is equivalent to that  $\sum_{n=1}^\infty\langle |\mu|^n, h\rangle>0$ for every non-zero $h\in C_c(K)^+$, or equivalently there exists  $n\in \mathbb{N}$  such that
		$$
		\langle |\mu|^n, h\rangle>0.
		$$
		Therefore, if  $f\in C_b(K)^+$ is non-zero, then we may find $h\in C_c(K)^+$ such that $\|h\|_\infty=1$ and 
		$fh\neq 0$. It follows that $fh\in C_c(K)^+$ and $fh\leq f$. Therefore,  there exists  $n\in \mathbb{N}$  such that
		$$
		0<\langle |\mu|^n, fh\rangle\leq \langle |\mu|^n, f\rangle.
		$$
	}
\end{remark}

\begin{theorem}\label{sub}
	Let $\sigma$ be a probability measure on $K$. Then the following statements are equivalent.
	
	\emph{(i)} $H^\infty_\sigma(K)$ is a subalgebra of $L^\infty(K)$.
	
	\emph{(ii)} $H^\infty_\sigma(K)$ is a von Neumann subalgebra of $L^\infty(K)$..
	
	\emph{(iii)} $H^\infty_\sigma(K)=\{f\in L^\infty(K): \forall x\in K, f(\check{y}*x)=f(x),~ \hbox{for  $\sigma-$a.e.}~ y\in K \}$.
\end{theorem}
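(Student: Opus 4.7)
The plan is to prove the cycle (i) $\Rightarrow$ (iii) $\Rightarrow$ (ii) $\Rightarrow$ (i). The implication (ii) $\Rightarrow$ (i) is immediate from the definitions, so the real content lies in the first two arrows.

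For (i) $\Rightarrow$ (iii), first I would invoke Proposition \ref{pos} to reduce to a non-negative $f \in H^\infty_\sigma(K)$, since any element of $H^\infty_\sigma(K)$ is a linear combination of such functions and the intersection of finitely many sets of full $\sigma$-measure is again of full $\sigma$-measure. Assuming (i), $f^2 \in H^\infty_\sigma(K)$, so for every $x \in K$,
\[
f(x)^2 = (\sigma * f)(x)^2 = \left(\int_K f(\check{y}*x)\, d\sigma(y)\right)^2 \leq \int_K f(\check{y}*x)^2\, d\sigma(y) = (\sigma * f^2)(x) = f(x)^2,
\]
the middle inequality being Cauchy--Schwarz (equivalently, Jensen) against the probability measure $\sigma$. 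Equality throughout forces $y\mapsto f(\check{y}*x)$ to be $\sigma$-a.e.\ constant, and the constant must equal the $\sigma$-mean $(\sigma*f)(x)=f(x)$; this is exactly the pointwise condition of (iii).

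For (iii) $\Rightarrow$ (ii), the description in (iii) makes it transparent that $H^\infty_\sigma(K)$ is closed under pointwise products (by intersecting the $\sigma$-null sets in $y$ attached to $f$ and $g$ at each fixed $x$), closed under complex conjugation, contains the constant $1$, and is weak-$*$ closed because $H^\infty_\sigma(K) = J_\sigma^\perp$, as noted just before Lemma \ref{xb}. These properties together yield that $H^\infty_\sigma(K)$ is a von Neumann subalgebra of $L^\infty(K)$, which is (ii).

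The main obstacle I anticipate is bookkeeping around the pointwise formulation in (iii) versus the a.e.\ setting of $L^\infty(K)$: one must fix a Borel representative of each element of $H^\infty_\sigma(K)$ so that the statement \emph{``$f(\check{y}*x)=f(x)$ for $\sigma$-a.e.\ $y$''} is meaningful for every $x$, not merely for $\omega$-a.e.\ $x$. Once a canonical representative is selected (for example, the value $P_\sigma(f)(x)$, which is a bona fide bounded $\sigma$-harmonic function), the Cauchy--Schwarz computation goes through pointwise for all $x\in K$, and the rest of the argument is formal.
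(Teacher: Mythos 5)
Your proof is correct and is essentially the paper's own argument: the heart of both is the same variance/Jensen computation with respect to the probability measure $\sigma$ (the paper applies it to $(f-f(x))^2$ to prove (ii)$\Rightarrow$(iii), you apply Cauchy--Schwarz to $f$ and $f^2$ to prove (i)$\Rightarrow$(iii)), and the remaining implications are the same weak$^*$-closedness/self-adjointness observations, just with the cycle traversed in the other order. One shared caveat: since translation in a hypergroup is integration against $\delta_{\check y}*\delta_x$, one only has $f(\check y*x)^2\le (f^2)(\check y*x)$ in general, so your step $\int_K f(\check y*x)^2\,d\sigma(y)=(\sigma*f^2)(x)$ should read ``$\le$'' (the paper makes the analogous identification); the chain is still sandwiched between $f(x)^2$ at both ends, so equality is forced and the conclusion stands.
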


\begin{proof}
	Since $H^\infty_\sigma(K)$ is a weak$^*$ closed operator system, (i) implies that $H^\infty_\sigma(K)$ is
	a von Neumann subalgebra of $L^\infty(K)$. The implication (iii)$\Rightarrow$(i) is trivial. We need to prove (ii)$\Rightarrow$(iii).   Let $x \in K$ and $f\in H^\infty_\sigma(K)$. Without loss of generality assume that $f$ is real valued. Since $(f-f(x))^2\in H^\infty_\sigma(K)$, it follows that
	\begin{eqnarray*}
	\int_{K} (f(\check{y}\ast x)-f(x))^2d\sigma(y)&=&\sigma*(f-f(x))^2(x)\\
	&=&(f(x)-f(x))^2=0.
	\end{eqnarray*}
	This implies that $f(\check{y}\ast x)=f(x)$ for $\sigma-$almost every $y\in K$.
\end{proof}
For a hypergroup $K$, we denote by $LUC(K)$ to be  the Banach space of all bounded left uniformly
continuous complex functions on $K$, consisting of bounded continuous function $f$ on
$K$ such that the map $K\ni x\mapsto f_x\in C_b(K)$ is continuous.
\begin{lemma}
	Let $\sigma\in M(K)$. Then $H_\sigma^\infty(K)\cap LUC(K)$ is weak$^*$ dense in $H_\sigma^\infty(K)$.
\end{lemma}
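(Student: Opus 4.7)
The plan is to smooth $f$ by a right-averaging against a compactly supported approximate identity. Let $(U_\alpha)$ be a basis of compact neighborhoods of $e$ directed by reverse inclusion, and for each $\alpha$ fix $\psi_\alpha\in C_c(K)$ with $\psi_\alpha\ge 0$, ${\rm supp}(\psi_\alpha)\subseteq U_\alpha$, and $\int_K\psi_\alpha\,d\omega=1$. Define
$$
F_\alpha(x)\;=\;\int_K f(x*s)\,\psi_\alpha(s)\,d\omega(s)\;=\;\int_K f\,d(\delta_x*\psi_\alpha\omega).
$$
The strategy is to verify that each $F_\alpha$ lies in $H_\sigma^\infty(K)\cap LUC(K)$ and that $F_\alpha\to f$ in the weak$^*$ topology of $L^\infty(K)$.

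The $\sigma$-harmonicity is the easy part: for each fixed $s$, $f(\cdot *s)=f*\delta_{\check s}$, so by the associativity of hypergroup convolution (recorded in Section~2),
$$
\sigma*\bigl(f(\cdot *s)\bigr)\;=\;\sigma*(f*\delta_{\check s})\;=\;(\sigma*f)*\delta_{\check s}\;=\;f*\delta_{\check s}\;=\;f(\cdot *s).
$$
Fubini then gives $\sigma*F_\alpha=\int_K\psi_\alpha(s)\,f(\cdot*s)\,d\omega(s)=F_\alpha$. For the weak$^*$ convergence, fix $h\in L^1(K)$; another application of Fubini yields $\langle F_\alpha,h\rangle=\int_K\psi_\alpha(s)\,G(s)\,d\omega(s)$, where $G(s)=\int_K f(x*s)\,h(x)\,d\omega(x)$. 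Using the adjoint relation $\langle\mu*\nu,f\rangle=\langle\nu,\check\mu*f\rangle$ from Section~2, $G$ can be recast as a pairing of $f$ with a translate of $h$, after which the continuity of translation on $L^1(K)$ forces $G$ to be continuous at $e$ with $G(e)=\langle f,h\rangle$. Since $\psi_\alpha$ is a probability density concentrated on the shrinking $U_\alpha$, one concludes $\langle F_\alpha,h\rangle\to\langle f,h\rangle$.

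The main step, and the anticipated obstacle, is verifying $F_\alpha\in LUC(K)$. Writing $(F_\alpha)_y(x)=\int_K f\,d\bigl(\delta_x*(\delta_y*\psi_\alpha\omega)\bigr)$ and using that $\delta_x$ is a probability measure, one obtains
$$
\bigl\|(F_\alpha)_y-(F_\alpha)_{y_0}\bigr\|_\infty\;\le\;\|f\|_\infty\,\bigl\|\delta_y*\psi_\alpha\omega-\delta_{y_0}*\psi_\alpha\omega\bigr\|_{\mathrm{TV}},
$$
uniformly in $x$. By the left-invariance of $\omega$, the measure $\delta_y*\psi_\alpha\omega$ is absolutely continuous with respect to $\omega$, and the map $y\mapsto\delta_y*\psi_\alpha\omega$ is norm-continuous in $M(K)$; this is the standard hypergroup analogue of the continuity of left translation on $L^1(K)$, which I would invoke from the literature rather than reprove. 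It follows that the right-hand side tends to $0$ as $y\to y_0$, giving $F_\alpha\in LUC(K)$ and completing the argument.
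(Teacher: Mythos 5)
Your proposal is correct and is essentially the paper's argument: your $F_\alpha$ is exactly $f*\check{\psi_\alpha}$ (equivalently $\psi_\alpha\cdot f$) for a bounded approximate identity $(\psi_\alpha)$ of normalized bumps with shrinking supports, which is precisely the paper's construction $f*\check{\phi_\alpha}$. The only difference is bookkeeping: the paper cites \cite[Lemma 2.2]{skan} for $f*\check{\phi_\alpha}\in LUC(K)$ and uses the approximate identity property for the weak$^*$ convergence, while you verify both by hand (your total-variation estimate and the continuity of $G$ at $e$, where the modular function $\Delta(\check{s})\to 1$ quietly enters), which is fine.
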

Let $(\phi_\alpha)$ be a bounded approximate identity for $L^1(K)$ and $f\in H^\infty_\sigma(K)$. Then $f*\check{\phi_\alpha}\in H^\infty_\sigma(K)\cap LUC(K)$ for all $\alpha$, by \cite[Lemma 2.2]{skan}. Moreover, for each $g\in L^1(K)$, we have
$$
\lim_\alpha\langle f*\check{\phi_\alpha}, g\rangle=\lim_\alpha\langle f, g*\phi_\alpha\rangle=\langle f, g\rangle.
$$
Thus, $H_\sigma^\infty(K)\cap LUC(K)$ is weak$^*$ dense in $H_\sigma^\infty(K)$.
\begin{corollary}
	Let $\sigma$ be a non-degenerate probability measure on $K$. Then the following conditions are equivalent.
	
	{\rm (i)}   $H_\sigma^\infty(K)$ is a subalgebra of $L^{\infty}(K)$.
	
	{\rm (ii)} $H_\sigma^\infty(K)= \mathbb{C}{1}$.
\end{corollary}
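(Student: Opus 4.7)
The implication (ii)$\Rightarrow$(i) is immediate since $\mathbb{C}1$ is trivially a subalgebra. For (i)$\Rightarrow$(ii), I would first use the preceding lemma to reduce to the continuous case: since $H_\sigma^\infty(K) \cap LUC(K)$ is weak$^*$ dense in $H_\sigma^\infty(K)$, it suffices to show that every $f \in H_\sigma^\infty(K) \cap LUC(K)$ (taken as its continuous representative) is constant. Reducing as usual to real-valued $f$, the key is to revisit the computation in the proof of Theorem~\ref{sub}: because $H_\sigma^\infty(K)$ is a subalgebra, $(f-f(x))^2 \in H_\sigma^\infty(K)$ for every $x \in K$, and writing out $\sigma*(f-f(x))^2(x) = 0$ gives
\begin{equation*}
\int_K \int_K (f(t)-f(x))^2\, d(\delta_{\check{y}}\ast\delta_x)(t)\, d\sigma(y) = 0.
\end{equation*}
Nonnegativity of the integrand then forces, for $\sigma$-a.e.\ $y$, the sharper pointwise identity $f \equiv f(x)$ on $\mathrm{supp}(\delta_{\check{y}}\ast\delta_x)$ (using continuity of $f$). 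The weak continuity of convolution (and Michael continuity of supports) makes the set of such $y$ closed, so it contains $\mathrm{supp}\,\sigma$.

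From here I would iterate. Starting with $x = e$, for any $y_1 \in \mathrm{supp}\,\sigma$ we obtain $f(\check{y}_1) = f(e)$. Applying the same identity with $x = \check{y}_1$ and $y = y_2 \in \mathrm{supp}\,\sigma$ gives $f \equiv f(e)$ on $\mathrm{supp}(\delta_{\check{y}_2}\ast\delta_{\check{y}_1})$. Chaining $n$ steps yields $f \equiv f(e)$ on $\mathrm{supp}(\delta_{\check{y}_n}\ast\cdots\ast\delta_{\check{y}_1})$ for every choice of $y_1,\dots,y_n \in \mathrm{supp}\,\sigma$; taking unions, $f$ is identically $f(e)$ on $\bigcup_{n\geq 1}(\mathrm{supp}\,\sigma^n)\check{}$. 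By Remark~\ref{rem} and the assumed non-degeneracy of $\sigma$, the set $\bigcup_n \mathrm{supp}\,\sigma^n$ is dense in $K$; since the involution is a homeomorphism, so is its image. Hence $f$ agrees with $f(e)$ on a dense set, and by continuity $f$ is constant on $K$.

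The main obstacle, I expect, is the iteration step: one must both (a) extract the sharper pointwise identity $f \equiv f(x)$ on $\mathrm{supp}(\delta_{\check{y}}\ast\delta_x)$ from the squared-integral equation above (this is strictly stronger than the $\sigma$-averaged formulation in Theorem~\ref{sub}(iii), and is what makes the chaining work), and (b) keep track of the involution appearing in $\check{\sigma}$ under chaining, noting that non-degeneracy of $\sigma$ transfers to $\check{\sigma}$ because the involution is a homeomorphism of $K$.
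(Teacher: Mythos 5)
Your proof is correct and follows essentially the same route as the paper: reduce to $H_\sigma^\infty(K)\cap LUC(K)$ by the weak$^*$ density lemma, exploit the subalgebra hypothesis through the squared-function computation behind Theorem \ref{sub}, and conclude from non-degeneracy of $\sigma$, continuity, and weak$^*$ density. The only difference is that you spell out the pointwise identity $f\equiv f(x)$ on ${\rm supp}(\delta_{\check{y}}*\delta_x)$ and the chaining over $({\rm supp}\,\sigma)^n$ (together with the passage from $\sigma$ to $\check{\sigma}$), details which the paper's brief appeal to Theorem \ref{sub} leaves implicit.
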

\begin{proof}
	Suppose that (i) holds. Given $f\in H_\sigma^\infty(K)\cap LUC(K)$, by Theorem \ref{sub} for each  $n\in\mathbb{N}$, we have
	$\check{f}(y)=\check{f}(e)$ for all $y\in (\rm{supp} \sigma)^n$. It follows
	from non-degeneracy of $\sigma$ and continuity of $f$ that
	$f$ is constant. Since $H_\sigma^\infty(K)\cap LUC(K)$ is weak$^*$ dense in $H_\sigma^\infty(K)$, we give that $H_\sigma^\infty(K)= \mathbb{C}{1}$.
\end{proof}
A subspace $X$ of $L^p(K)$, $1\leq p\leq\infty$, is called left (resp. right) translation invariant if $_xf\in X$ (resp. $f_x\in X$) for all $f\in X$ and $x\in K$. The subspace $X$ is called translation invariant if it is left and right translation invariant. It is easy to check that for each $\sigma\in M(K)$ the space  $H^\infty_\sigma(K)$ is a right translation invariant subspace of $L^\infty(K)$. Since $\int_K (g_x)(t)f(t) d\omega(t)=\frac{1}{\Delta(x)}\int_Kg(t)(f_{\check{x}})(t)  d\omega(t)$  for all $f\in L^\infty(K)$, $g\in L^1(K)$ and $x\in K$, it follows that $J_\sigma$ is also  right translation invariant in $L^1(K)$, where $\Delta$ is the modular function on $K$. We recall that $L^\infty(K)$ is naturally  a Banach $L^1(K)$-bimodule by the following module actions
$$
\langle g\cdot f, h\rangle =\langle f, h*g\rangle,\quad \langle f\cdot g, h\rangle=\langle  f, g*h\rangle\quad (f\in L^\infty(K), g, h\in L^1(K)).
$$
It is easily
verified that  
$$(g\cdot f)(x)=\int_K g(y)( _xf)(y) d\omega(y),\quad (f\cdot g)(x)=\int_K g(y)(f_x)(y) d\omega(y)$$
for all $x\in K$.
\begin{proposition}\label{alg}
	Let $\sigma \in M(K)$. Then the following conditions are equivalent.
	
	{\rm(i)} $H^\infty_\sigma(K)$ is translation invariant.
	
	{\rm(ii)} $J_\sigma$ is translation invariant.
	
	{\rm(iii)} $J_\sigma$ is an ideal in $L^1(K)$.
	
	{\rm(iv)} $\int_K f(\check{y}*x) d\sigma(y)=\int_K f(x*\check{y}) d\sigma(y)$  for all $f\in H^\infty_\sigma(K)\cap LUC(K)$.
	
	{\rm(v)} $H^\infty_\sigma(K)$ is a sub-$L^1(K)$-bimodule of $L^\infty(K)$.
\end{proposition}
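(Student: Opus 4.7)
The plan is to close a cycle of implications using the identification $H^\infty_\sigma(K) = J_\sigma^\perp$, the weak$^*$-density of $H^\infty_\sigma(K) \cap LUC(K)$ in $H^\infty_\sigma(K)$ established in the preceding lemma, and the observation (recalled just before the statement) that $H^\infty_\sigma(K)$ and $J_\sigma$ are already right translation invariant, so that only left translation invariance is at issue in (i) and (ii).

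The equivalence (i) $\Leftrightarrow$ (ii) follows by annihilator duality: from the identity $\int_K (g_x)(t) f(t) d\omega(t) = \Delta(x)^{-1} \int_K g(t) (f_{\check{x}})(t) d\omega(t)$ displayed just before the statement, one reads off that right (resp. left) translation on $L^\infty(K)$ is, up to the modular factor, the adjoint of left (resp. right) translation on $L^1(K)$, so translation invariance of $H^\infty_\sigma(K)$ and of $J_\sigma$ is equivalent. The equivalence (ii) $\Leftrightarrow$ (iii) is standard: a norm-closed subspace of $L^1(K)$ is a two-sided ideal if and only if it is left and right translation invariant, since $\phi * g$ with $\phi \in L^1(K)$ lies in the closed convex hull of left translates of $g$ (and analogously on the other side), while the converse follows by convolving with a bounded approximate identity. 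The equivalence (i) $\Leftrightarrow$ (v) is the parallel statement in $L^\infty(K)$: the formulas $(g \cdot f)(x) = \int_K g(y) ({_x f})(y) d\omega(y)$ and $(f \cdot g)(x) = \int_K g(y) (f_x)(y) d\omega(y)$ recalled just above exhibit the module actions as weak$^*$-integrals of translates of $f$, so the weak$^*$-closed subspace $H^\infty_\sigma(K)$ is translation invariant exactly when it is a sub-bimodule.

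It remains to treat (i) $\Leftrightarrow$ (iv). For (i) $\Rightarrow$ (iv), given $f \in H^\infty_\sigma(K) \cap LUC(K)$, the left translation invariance forces ${_x f} \in H^\infty_\sigma(K)$ for each $x \in K$; evaluating $\sigma * ({_x f})(e) = ({_x f})(e) = f(x)$ yields $\int_K f(x * \check{y}) d\sigma(y) = f(x)$, which, combined with $\int_K f(\check{y} * x) d\sigma(y) = f(x)$ coming from $f \in H^\infty_\sigma(K)$, gives (iv). For the converse, condition (iv) together with $\sigma * f = f$ gives $f * \sigma = f$ for every $f \in H^\infty_\sigma(K) \cap LUC(K)$; since $LUC(K)$ and $H^\infty_\sigma(K)$ are both right translation invariant, each right translate $f_t$ still lies in $H^\infty_\sigma(K) \cap LUC(K)$, and applying (iv) to $f_t$ yields $f_t * \sigma = f_t$. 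Associativity of the hypergroup convolution then produces
\[
\sigma * ({_x f})(t) = \int_K f((x * \check{y}) * t)\, d\sigma(y) = (f_t * \sigma)(x) = f_t(x) = ({_x f})(t),
\]
so ${_x f} \in H^\infty_\sigma(K)$ for every $x$ and every $f \in H^\infty_\sigma(K) \cap LUC(K)$. Since left translation on $L^\infty(K)$ is the weak$^*$-continuous adjoint of $g \mapsto \delta_x * g$ on $L^1(K)$ and $H^\infty_\sigma(K)$ is weak$^*$-closed, the weak$^*$-density of $H^\infty_\sigma(K) \cap LUC(K)$ from the preceding lemma promotes this to left translation invariance of all of $H^\infty_\sigma(K)$, yielding (i).

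The main obstacle will be the implication (iv) $\Rightarrow$ (i): one has to use associativity to recognize $\sigma * ({_x f})(t)$ as $(f_t * \sigma)(x)$, and then invoke weak$^*$-continuity and the density lemma to pass from $LUC$ functions to general elements of $H^\infty_\sigma(K)$. The other equivalences are essentially transpositions to the hypergroup setting of classical duality arguments, but keeping track of the modular function in the translation/annihilator duality for (i) $\Leftrightarrow$ (ii) also warrants careful verification.
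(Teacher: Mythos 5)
Much of your outline coincides with the paper's own proof: the reduction to left translation invariance, the bounded approximate identity argument for (iii)$\Rightarrow$(ii), and both directions of (i)$\Leftrightarrow$(iv) (your identity $\sigma*({_xf})(t)=(f_t*\sigma)(x)=f_t(x)$ is exactly the paper's chain of equalities, followed by the same weak$^*$-density argument). However, two steps are flawed as written. First, in (i)$\Leftrightarrow$(ii) you claim that right (resp.\ left) translation on $L^\infty(K)$ is, up to the modular factor, the adjoint of left (resp.\ right) translation on $L^1(K)$. The pairing is same-sided, not crossed: the displayed identity $\int_K (g_x)f\,d\omega=\Delta(x)^{-1}\int_K g\,(f_{\check{x}})\,d\omega$ says that the adjoint of right translation on $L^1(K)$ is a positive multiple of right translation (by $\check x$) on $L^\infty(K)$, and what the equivalence needs is its left-sided analogue, $\int_K({_xg})f\,d\omega$ equal to a positive multiple of $\int_K g\,({_{\check{x}}}f)\,d\omega$, which is the identity the paper actually uses together with $H^\infty_\sigma(K)=J_\sigma^\perp$ and $J_\sigma={}^\perp\big(H^\infty_\sigma(K)\big)$. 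Taken literally, your crossed version would relate left invariance of $J_\sigma$ to right invariance of $H^\infty_\sigma(K)$ (and vice versa); since both spaces are automatically right translation invariant, that yields nothing about (i)$\Leftrightarrow$(ii).

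Second, your treatment of (v) only establishes (i)$\Rightarrow$(v). The stated reason, that the module actions are weak$^*$-integrals of translates, shows that translation invariance forces $g\cdot f$ and $f\cdot g$ to stay in the weak$^*$-closed space $H^\infty_\sigma(K)$; it does not show the converse, since membership of these averaged translates does not by itself return the individual translates ${_xf}$ to the space. As you give no other exit from (v), in your scheme (v) is implied by (i)--(iv) but not shown to imply them, so the five conditions are not proved equivalent. The repair is short: either prove (v)$\Rightarrow$(iii) as the paper does, noting that for $f\in H^\infty_\sigma(K)$, $g\in L^1(K)$, $h\in J_\sigma$ one has $\langle f,g*h\rangle=\langle f\cdot g,h\rangle=0$, whence $g*h\in{}^\perp\big(J_\sigma^\perp\big)=J_\sigma$; or recover ${_xf}$ as the weak$^*$ limit of $f\cdot(\delta_x*\phi_\alpha)$ for a bounded approximate identity $(\phi_\alpha)$. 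A smaller point: your route for (ii)$\Rightarrow$(iii), realizing $\phi*g$ as lying in the closed linear span (not just convex hull) of translates of $g$, needs norm continuity of translation on $L^1(K)$ to justify the vector-valued integral; the paper sidesteps this by pairing $g*h$ directly against $H^\infty_\sigma(K)$ and using left translation invariance of $J_\sigma$ there.
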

\begin{proof}
	(i)$\Leftrightarrow$(ii).
	Since $\int_K  ({_x}g)(t)f(t) d\omega(t)
	=\frac{1}{\Delta(x)}\int_Kg(t)( _{\check{x}}f)(t)  d\omega(t)$ 
	for all $f\in L^\infty(K)$, $g\in L^1(K)$ and $x\in K$, it follows that $H^\infty_\sigma(K)$ is left translation invariant if and only if $J_\sigma$ is.
	
	(ii)$\Rightarrow$(iii). It suffices to show that  $J_\sigma$ is a left ideal in $L^1(K)$. To prove this, given $g\in L^1(K)$, $h\in J_\sigma$ and $f\in H_\sigma^\infty(K)$, we have
	\begin{eqnarray*}
	\langle f, g*h\rangle&=&\int_K f(x)(\int_Kg(y)h(\check{y}*x) d\omega(y))d\omega(x)\\
	&=&\int_Kg(y)(\int_Kf(x)( _{\check{y}}h)(x) d\omega(x))d\omega(y)=0,
\end{eqnarray*}
	which implies that $g*h\in J_\sigma$. 
	
	(iii)$\Rightarrow$(ii). Let $(\phi_\alpha)$ be a bounded approximate identity for $L^1(K)$ and let $g\in J_\sigma$. Since $(( _{x}\phi_\alpha)*g)=  _{x}(\phi_\alpha*g)$ and $_{x}\phi_\alpha\in L^1(K)$ for all $\alpha$ and $x\in K$, the proof follows from the fact that $\phi_\alpha*g\rightarrow g$.
	
	(i)$\Rightarrow$(iv). Suppose that $f\in H^\infty_\sigma(K)\cap LUC(K)$. Since  $H^\infty_\sigma(K)$ is left translation invariant, we obtain that 
	\begin{eqnarray*}
		\int_K f(\check{y}*x) d\sigma(y)&=&{_{x}}(\sigma*f)(e)\\
		&=&( _{x}f)(e)=(\sigma*( _{x}f))(e)\\
		&=&\int_K f(x*\check{y}) d\sigma(y).
	\end{eqnarray*}
	
	(iv)$\Rightarrow$(i). Suppose that $f\in H^\infty_\sigma(K)\cap LUC(K)$ and $x\in K$. Then $f_x\in  H^\infty_\sigma(K)\cap LUC(K)$, by right translation invariance of $H^\infty_\sigma(K)\cap LUC(K)$. Moreover, for each $y\in K$, we have
	\begin{eqnarray*}
		( _{x}f)(y)= _{x}(\sigma*f)(y)&=&\int_K f(\check{t}*x*y) d\sigma(t)\\
		&=& \int_K (f_y)(\check{t}*x) d\sigma(t)\\
		&=& \int_K (f_y)(x*\check{t}) d\sigma(t)\\
		&=&\int_K f(x*\check{t}*y) d\sigma(t)\\
		&=&(\sigma*( _{x}f))(y).
	\end{eqnarray*}
	This shows that $H^\infty_\sigma(K)\cap LUC(K)$ and hence  $H^\infty_\sigma(K)$ is left translation invariant by weak$^*$ density of $H^\infty_\sigma(K)\cap LUC(K)$ in $H^\infty_\sigma(K)$.
	
	(i)$\Rightarrow$(v). Let $f\in H_\sigma^\infty(K)$ and $g\in L^1(K)$.  As $g\cdot f=f*\check{g}$ , we give that $g\cdot f\in H^\infty_\sigma(K)$. Moreover, by assumption $_{t}f\in H_\sigma^\infty(K)$ for all $t\in K$. Thus, 
	\begin{eqnarray*}
		(\sigma*(f\cdot g))(x)&=&\int_K(f\cdot g)(\check{y}*x) d\sigma(y)\\
		&=&\int_K\int_K g(t)(f_{\check{y}*x})(t) d\sigma(y)d\omega(t)\\
		&=&\int_K\int_K g(t)( _{t}f)({\check{y}*x}) d\sigma(y)d\omega(t)\\
		&=&\int_K g(t)( _{t}f)(x) d\omega(t)=(g\cdot f)(x). 
	\end{eqnarray*}
	This implies that $f\cdot g\in H^\infty_\sigma(K)$.
	
	(v)$\Rightarrow$(iii). It suffices to show that $J_\sigma$ is a left ideal in $L^1(K)$. Indeed, given $f\in H_\sigma^\infty(K)$,  $g\in L^1(K)$ and $h\in J_\sigma$, we have
	$$
	\langle f, g*h\rangle=\langle f\cdot g, h\rangle=0,
	$$
	which yields that $g*h\in J_\sigma$, as required.
\end{proof}

\begin{remark}
	It is obvious that under each of above equivalent conditions in Proposition	\ref{alg}, the quotient space $L^1(K)/J_\sigma$ is a Banach algebra. This implies that $H^\infty_\sigma(K)^*=({L^1(K)}/{J_\sigma})^{**}$ is a Banach algebra with respect to the two Arens products.
\end{remark}	
\begin{theorem}\label{3.6}
	Let $\sigma$ be a non-degenerate probability measure on $K$. Then every bounded continuous $\sigma$-harmonic function on $K$ vanishing at infinity is constant.
\end{theorem}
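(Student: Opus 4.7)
The plan is a maximum-principle argument whose propagation step is driven by the non-degeneracy of $\sigma$.

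Since $\sigma$ is a real measure, harmonicity passes to real and imaginary parts, and I may assume $f$ is real-valued. Replacing $f$ by $-f$ if necessary, either $f\equiv 0$ (already constant) or $M:=\sup f>0$, which I now assume. Because $f\in C_0(K)$, the non-empty closed sets $\{f\ge M-\delta\}$ lie in the compactum $\{|f|\ge M/2\}$ for small $\delta$, so $f$ attains $M$ at some $x_0\in K$. Iterating $\sigma*f=f$ gives $\sigma^n*f=f$ for every $n\ge 1$, whence
\begin{equation*}
M=f(x_0)=\int_K f(\check y*x_0)\,d\sigma^n(y),
\end{equation*}
and since $f(\check y*x_0)=\int f\,d(\delta_{\check y}*\delta_{x_0})\le M$ pointwise, the function $h(y):=M-f(\check y*x_0)$ is nonnegative with $\int_K h\,d\sigma^n=0$ for every $n$.

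I next upgrade this to $h\equiv 0$ on $K$. Weak continuity of the convolution together with continuity of the involution make $y\mapsto\delta_{\check y}*\delta_{x_0}$ a weak$^\ast$-continuous map from $K$ into $M(K)$, and pairing with $f\in C_0(K)$ shows $h\in C_b(K)$. From $h\ge 0$ and $\int_K h\,d\sigma^n=0$ one obtains $h=0$ on $\operatorname{supp}\sigma^n$ for every $n$, and by non-degeneracy $\bigcup_n\operatorname{supp}\sigma^n$ is dense in $K$; continuity of $h$ then forces $h\equiv 0$ on $K$, i.e., $f(\check y*x_0)=M$ for every $y\in K$.

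Finally, I relocate the maximum point to the identity. Specializing $y=x_0$ gives $\int f\,d(\delta_{\check x_0}*\delta_{x_0})=M$, and since $f\le M$ pointwise this forces $f\equiv M$ throughout $\operatorname{supp}(\delta_{\check x_0}*\delta_{x_0})$. The hypergroup axiom $e\in\operatorname{supp}(\delta_x*\delta_y)\Leftrightarrow x=\check y$ places $e$ in this support, so $f(e)=M$; rerunning the previous paragraph with $x_0=e$ then yields $f(\check y)=f(\check y*e)=M$ for every $y\in K$, so $f\equiv M$. If $K$ is compact this already exhibits $f$ as a constant; if $K$ is non-compact the relation $f\equiv M>0$ contradicts $f\in C_0(K)$, so the assumption $\sup f>0$ is untenable, and applying the same reasoning to $-f$ gives $f\equiv 0$. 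The main obstacle I anticipate is making precise the continuity of $y\mapsto f(\check y*x_0)$ from the weak continuity of the convolution, and justifying that $h\ge 0$ with $\int_K h\,d\sigma^n=0$ forces $h=0$ on $\operatorname{supp}\sigma^n$; once these technicalities are settled, the density step via non-degeneracy and the involution/support trick to move the maximum to $e$ are both short.
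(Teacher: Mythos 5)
Your argument is correct, and while it belongs to the same Choquet--Deny/maximum-principle family as the paper's proof, its key mechanism is genuinely different. Both proofs revolve around the nonnegative function $h(y)=\sup f-f(\check y*x_0)$ (in the paper this appears as $\mu*(1-\check f)$ after normalizing $\|f\|_\infty=1$ and choosing a probability measure $\mu$ with $\langle\mu,f\rangle=1$) and the iterates $\sigma^n*f=f$. The paper, however, never localizes the maximum or propagates along supports: it certifies that $\mu*(1-\check f)$ is a non-zero element of $C_b(K)^+$ by integrating against the Haar measure, and then the reformulation of non-degeneracy in Remark \ref{rem} (some $\sigma^n$ pairs strictly positively with any non-zero $g\in C_b(K)^+$) collides with the computation $\langle\sigma^n,\mu*(1-\check f)\rangle=1-\langle f,\mu\rangle=0$, yielding the contradiction in one stroke. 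You instead prove $h\equiv 0$ directly (vanishing on $\operatorname{supp}\sigma^n$, density of $\bigcup_n\operatorname{supp}\sigma^n$, continuity), and you then need the additional relocation step via the axiom $e\in\operatorname{supp}(\delta_{\check x_0}*\delta_{x_0})$ to get $f(e)=M$ and hence $f(\check y)=M$ for all $y$ --- an ingredient the paper's duality computation does not require. What your route buys: it makes explicit the attainment of the supremum (used only tacitly in the paper when $\mu$ is chosen) and it handles the compact versus non-compact dichotomy cleanly, whereas the paper's conclusion $f=1$ still has to be reconciled with $f\in C_0(K)$ when $K$ is non-compact; what the paper's route buys is brevity and no appeal to the support axiom for $e$. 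The technical points you flag --- continuity of $y\mapsto f(\check y*x_0)$ from the continuity of the involution and of $(x,y)\mapsto\delta_x*\delta_y$, the fact that a nonnegative continuous function with $\int h\,d\sigma^n=0$ vanishes on $\operatorname{supp}\sigma^n$, and the associativity needed for $\sigma^n*f=f$ --- are all standard and go through exactly as you describe (the paper itself uses $\sigma^n*f=f$ without comment).
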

\begin{proof}
	Let $f\in {H}_\sigma^\infty(K)\cap C_0(K)$ be real-valued. Without loss of generality assume that $\|f\|_\infty=1$. Therefor, we can find a probability measure $\mu$ on $K$ such that $\|f\|_\infty=\langle \mu, f\rangle$. If $f\neq 1$, then  the function $1-f$ is also non-negative and non-zero $\sigma$-harmonic function in $C_b(K)$. It is well known from \cite[Proposition 1.2.16]{bloom} that  $\mu*(1-\check f)$ is a non-negative and bounded  continuous function. Moreover, $$\int_K\mu*(1-\check f)d\omega(x)=\mu(K)\int_K(1-\check f)d\omega(x)>0,$$ which implies that $\mu*(1-\check f)$ is non-zero. Since $\sigma$ is non-degenerate, by Remark \ref{rem}, there exists $n\in \mathbb{N}$  such that
	$$
	\langle \sigma^n, \mu*(1-\check f)\rangle>0.
	$$
	On the other hand, since $f$ is $\sigma$-harmonic,  $\sigma^n*f=f$. Therefore, 
	\begin{eqnarray*}
		\langle \sigma^n, \mu*(1-\check f)\rangle&=&\langle \sigma^n, (1-\mu*\check f)\rangle=1-\langle \sigma^n, \mu*\check f\rangle\\
		&=&1-\langle \check{\sigma^n}, f*\check \mu\rangle
		=1-\langle \check{\sigma^n}*\mu,  f\rangle\\
		&=&1-\langle \sigma^n*f, \mu\rangle=1-\langle f, \mu\rangle=0
	\end{eqnarray*}
	which is a contradiction.  Since ${H}_\sigma^\infty(K)\cap C_0(K)$ is generated by its non-negative elements, the proof is complete.
\end{proof}

\begin{corollary}
	Let $K$ be non-compact and let $\sigma$ be a non-degenerate probability measure on $K$. Then the sequence 
	$\left(\frac{1}{n}\sum_{k=1}^n \sigma^k\right)$ is weak$^*$-convergent  to $0$. 
\end{corollary}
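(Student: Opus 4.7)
The plan is to show that every weak$^*$ cluster point of the bounded sequence $\mu_n:=\frac{1}{n}\sum_{k=1}^n \sigma^k$ in $M(K)=C_0(K)^*$ is zero; since $\|\mu_n\|=1$, Banach--Alaoglu will then force $\mu_n\to 0$ weak$^*$ (any failure would, via a further subnet of any offending subsequence, yield a nonzero cluster point, a contradiction). Let $\mu_{n_j}\to\mu$ weak$^*$ along some subnet. The telescoping identity $\sigma*\mu_n-\mu_n=\frac{1}{n}(\sigma^{n+1}-\sigma)$ has norm at most $2/n$, so $\sigma*\mu_n-\mu_n\to 0$ in norm. Since $\langle\sigma*\alpha,f\rangle=\langle\alpha,\check\sigma*f\rangle$ for all $\alpha\in M(K)$ and $f\in C_0(K)$, and since $\check\sigma*f\in C_0(K)$ (see below), the map $\alpha\mapsto\sigma*\alpha$ is weak$^*$-to-weak$^*$ continuous on $M(K)$. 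Hence $\sigma*\mu_{n_j}\to\sigma*\mu$ weak$^*$, and combined with the norm convergence above this yields $\sigma*\mu=\mu$.

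Next I would use Theorem~\ref{3.6} to conclude $\mu=0$. Fix $f\in C_0(K)$ and set $g:=\mu*f$. By the same $C_0$-stability result, $g\in C_0(K)$; and associativity of the convolution gives
\begin{equation*}
\sigma*g=\sigma*(\mu*f)=(\sigma*\mu)*f=\mu*f=g,
\end{equation*}
so $g\in H_\sigma^\infty(K)\cap C_0(K)$. Since $K$ is non-compact, the only constant function in $C_0(K)$ is $0$; therefore Theorem~\ref{3.6} forces $g\equiv 0$. Evaluating at the identity yields
\begin{equation*}
0=g(e)=\int_K f(\check y)\,d\mu(y)=\int_K\check f\,d\mu,
\end{equation*}
and because $f\mapsto\check f$ is a bijection of $C_0(K)$, this forces $\mu=0$, completing the reduction.

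The main technical obstacle is verifying that $\check\sigma*f$ and $\mu*f$ belong to $C_0(K)$ whenever $f\in C_0(K)$. Continuity and boundedness are standard hypergroup facts (Bloom--Heyer \cite{bloom}); vanishing at infinity would be obtained by applying dominated convergence to $\mu*f(x)=\int_K f(\check y*x)\,d\mu(y)$ once one knows that ${\rm supp}(\delta_{\check y}*\delta_x)$ eventually leaves any fixed compact set as $x\to\infty$. This in turn is a consequence of the Michael-topology continuity of $(x,y)\mapsto{\rm supp}(\delta_x*\delta_y)$ together with the hypergroup cancellation property $e\in{\rm supp}(\delta_x*\delta_y)\Leftrightarrow x=\check y$. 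Once this $C_0$-stability is granted, the rest is a clean combination of the telescoping identity, associativity, and Theorem~\ref{3.6}.
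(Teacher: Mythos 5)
Your proof is correct and takes essentially the same route as the paper's: show that any weak$^*$ cluster point $\mu$ of the averages satisfies $\sigma*\mu=\mu$, apply Theorem~\ref{3.6} to $\mu*f$ for $f\in C_0(K)$ together with non-compactness to get $\mu*f=0$ and hence $\mu=0$, and conclude by weak$^*$ compactness of the unit ball of $M(K)$. The only difference is that you spell out steps the paper leaves implicit (the telescoping estimate, the weak$^*$ continuity of $\alpha\mapsto\sigma*\alpha$, and the $C_0$-stability $M(K)*C_0(K)\subseteq C_0(K)$, which is indeed a standard fact from Bloom--Heyer), so no new idea is involved.
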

\begin{proof}
	Let $\sigma_0$ be a weak$^*$ cluster point of $(\frac{1}{n}\sum_{k=1}^n \sigma^k)$ in $M(K)$. Then $\sigma*\sigma_0=\sigma_0$, which implies that $\sigma_0$ is an idempotent probability measure in $M(K)$. Moreover, for each $f\in C_0(G)$ we have 
	$$
	\sigma*(\sigma_0*f)=(\sigma*\sigma_0)*f=\sigma_0*f.
	$$
	This shows that $\sigma_0*f$ is $\sigma$-harmonic and so it is  constant by Theorem \ref{3.6}. Since $K$ is non-compact, we must have $\sigma_0*f=0$  for all $f\in C_0(K)$. This implies that $\sigma_0=0$. Thus, $0$ is the only weak$^*$ cluster point of $\left(\frac{1}{n}\sum_{k=1}^n \sigma^k\right)$ in $M(K)$. By weak$^*$ compactness of the unit ball of $M(K)$ we conclude that 
	$\frac{1}{n}\sum_{k=1}^n \sigma^k\stackrel{w^*}\longrightarrow 0$.
\end{proof}

\begin{corollary}
	The hypergroup  $K$ is compact if and only if there is a  non-degenerate idempotent probability measure on $K$.  
\end{corollary}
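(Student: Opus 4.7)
The plan is to handle the two directions separately, with the nontrivial direction being an immediate application of the preceding corollary.

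For the forward direction, suppose $K$ is compact. Then the left Haar measure $\omega$ is finite, so $\sigma:=\omega/\omega(K)$ is a well-defined probability measure on $K$. I would verify that $\sigma$ is idempotent by using left invariance of $\omega$: since $\delta_x*\omega=\omega$ for every $x\in K$, integrating against $\sigma$ gives $\sigma*\omega=\omega$, and dividing by $\omega(K)$ yields $\sigma*\sigma=\sigma$. For non-degeneracy I would invoke the standard fact that $\operatorname{supp}\omega=K$ on a locally compact hypergroup with a left Haar measure (see \cite{bloom}), so that $\operatorname{supp}\sigma=K$ and trivially $K=\overline{\bigcup_n(\operatorname{supp}|\sigma|)^n}$.

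For the backward direction, suppose $\sigma$ is a non-degenerate idempotent probability measure on $K$, and assume toward contradiction that $K$ is non-compact. Since $\sigma*\sigma=\sigma$ an easy induction gives $\sigma^k=\sigma$ for every $k\in\mathbb{N}$, so
\[
\frac{1}{n}\sum_{k=1}^n\sigma^k=\sigma
\]
for all $n$. By the previous corollary, which applies because $K$ is non-compact and $\sigma$ is a non-degenerate probability measure, this sequence must converge weak$^*$ to $0$ in $M(K)$. Hence $\sigma=0$, which contradicts $\sigma(K)=1$.

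I do not anticipate any real obstacle: the hard analytic work has already been done in Theorem \ref{3.6} and the preceding corollary. The only mild subtlety is citing full support of the Haar measure for the forward implication, which is a standard structural fact for hypergroups and could alternatively be written out by observing that $\omega$ is, by definition, a nonzero positive Radon measure invariant under all left translations, forcing its support to be $K$.
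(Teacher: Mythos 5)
Your proof is correct and follows the route the paper clearly intends: the normalized Haar measure serves as the non-degenerate idempotent when $K$ is compact, and for the converse the idempotence forces the Ces\`aro averages $\frac{1}{n}\sum_{k=1}^n\sigma^k$ to equal $\sigma$ itself, so the preceding corollary (weak$^*$ convergence to $0$ on a non-compact $K$) gives $\sigma=0$, a contradiction. No gaps; the appeal to $\operatorname{supp}\omega=K$ for non-degeneracy is the standard fact and is handled adequately.
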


\begin{theorem}
	Let $K$ be non-compact and let $\sigma$ be a non-degenerate probability measure on $K$. Then we have $H_\sigma^p(K)=\{0\}$ for all $1<p<\infty$.
\end{theorem}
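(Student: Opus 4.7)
The plan is to produce, for each $f \in H_\sigma^p(K)$, a bounded continuous $\sigma$-harmonic function that vanishes at infinity, so that Theorem \ref{3.6} forces it to vanish; I then extract $f = 0$ by pairing against a dense family of test functions.

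Fix $f \in H_\sigma^p(K)$ and let $g \in C_c(K)$; consider $h := f * \check{g}$. By left invariance of $\omega$ one has $\int_K |\check{g}(\check{y}*x)|^q\, d\omega(y) = \|g\|_q^q$ for every $x$, so H\"older's inequality gives $\|h\|_\infty \le \|f\|_p \|g\|_q$ and shows $h \in C_b(K)$. To upgrade this to $h \in C_0(K)$, I would approximate $f$ in $L^p$-norm by a sequence $f_n \in C_c(K)$; the same H\"older estimate yields $f_n * \check{g} \to h$ uniformly, while each $f_n * \check{g}$ has compact support because, by the hypergroup axioms, the convolution of two measures with compact support again has compact support (with continuous dependence on the factors in the Michael topology). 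Hence $h \in C_0(K)$.

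Next, I would verify that $h$ is $\sigma$-harmonic via the associativity $\sigma * (f * \check{g}) = (\sigma * f) * \check{g}$, the identity already invoked in the proof of Theorem \ref{sub}; combined with $\sigma * f = f$, it gives $\sigma * h = h$. Theorem \ref{3.6} then forces $h$ to be constant, and since $K$ is non-compact the only constant function in $C_0(K)$ is $0$, so $h \equiv 0$.

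Finally, evaluating at $e$: since $\delta_{\check y} * \delta_e = \delta_{\check y}$ we have $\check g(\check y * e) = \check g(\check y) = g(y)$, and therefore
\[
0 = h(e) = \int_K f(y)\, g(y)\, d\omega(y) = \langle f, g\rangle
\]
for every $g \in C_c(K)$. Density of $C_c(K)$ in $L^q(K)$ (here $1 < q < \infty$) then forces $f = 0$. I expect the main technical point to be the assertion $h \in C_0(K)$: on a group this is immediate from translation by an inverse, but on a hypergroup it rests on compactness of $A*B$ for compact $A, B \subset K$ together with the Michael-continuity of the support map, and on making the pointwise $L^p$-approximation of $h$ robust in the sup norm.
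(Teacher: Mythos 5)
Your argument is correct, but it follows a genuinely different route from the paper. The paper first reduces to a non-negative $f\in H^p_\sigma(K)$ with $\|f\|_p=1$ via Proposition \ref{pos}, picks a norming functional $g\in L^q(K)$ with $\langle f,g\rangle=1$, forms the auxiliary measure $\sigma_0=\sum_{n\ge1}2^{-n}\sigma^n$, and derives a contradiction directly: $\langle\sigma_0,1-g*\check f\rangle=0$ together with non-degeneracy forces $g*\check f\equiv 1\in C_0(K)$, impossible on non-compact $K$; Theorem \ref{3.6} is never invoked there. You instead smooth an arbitrary $f\in H^p_\sigma(K)$ on the right by $\check g$ with $g\in C_c(K)$, use $\sigma*(f*\check g)=(\sigma*f)*\check g$ to see $h=f*\check g$ is harmonic, conclude $h\equiv 0$ from Theorem \ref{3.6} and non-compactness, and finish by evaluating at $e$ and density of $C_c(K)$ in $L^q(K)$. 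Your route avoids the positivity reduction and the norming-functional trick and is more modular (it turns the $L^p$ statement into a corollary of the $C_0$ Liouville theorem), at the cost of needing the mixed associativity for $f\in L^p$, $g\in C_c$ (a routine Fubini argument; note the identity you quote appears in the proof of Theorem \ref{ame}, not Theorem \ref{sub}). Two small points: your claimed equality $\int_K|\check g(\check y*x)|^q\,d\omega(y)=\|g\|_q^q$ is in general only an inequality $\le$ on a hypergroup, since $|\check g(\check y*x)|\le|\check g|(\check y*x)$ (the translate is an average) and Jensen's inequality then gives $|\check g(\check y*x)|^q\le(|\check g|^q)(\check y*x)$, after which left invariance applies; the inequality is all you need for the H\"older bound and the uniform approximation. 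Also, the membership $h\in C_0(K)$ with $\|h\|_\infty\le\|f\|_p\|g\|_q$ is exactly the Bloom--Heyer facts (1.4.11)--(1.4.12) that the paper cites, so you could simply quote them instead of reproving via compactly supported approximants.
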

\begin{proof}
	Let $1<p<\infty$ and let $f$ be a non-negative function in $H_\sigma^p(K)$ with $\|f\|_p=1$. Consider the probability measure $\sigma_0$ on $K$ defined by
	$$
	\sigma_0:=\sum_{n=1}^\infty\frac{1}{2^n}\sigma^n.
	$$
	It is easy to see that $\sigma_0*f=f$. Moreover, we may find $g\in L^q(K)$ with $\|g\|_q\leq1$ such that $\langle f, g\rangle=\|f\|_p=1$, where $1<q< \infty$ and $\frac{1}{p}+\frac{1}{q}=1$. It follows from \cite[(1.4.11), (1.4.12)]{bloom} that 
	$g*\check{f}\in C_0(K)$ and $\|g*\check{f}\|_\infty\leq \|g\|_q\|f\|_p\leq1$. Therefore, $1-g*\check{f}\geq 0$ and 
	\begin{eqnarray*}
		\langle\sigma_0, 1-g*\check{f}\rangle&=&1-\langle\sigma_0, g*\check{f}\rangle\\
		&=&1-\langle \sigma_0*f, g\rangle\\
		&=&1-\langle f, g\rangle=0.
	\end{eqnarray*}
	It follows from non-degeneracy of $\sigma$ that $1=g*\check{f}\in C_0(K)$, contradicting $K$ being non-compact. Hence, $f=0$. Thus, Proposition \ref{pos} implies that $H_\sigma^p(K)=\{0\}$.
\end{proof}
We use the  following result  which is proved in \cite[Theorem 3.8]{ac} to show that if $\sigma$ is an adapted probability
measure on compact hypergroup $K$, then  each $\sigma$-harmonic  $L^p$-function is trivial for all $1\leq p\leq \infty$. 
\begin{theorem}\label{ami}
	If $\sigma$ is an adapted probability measure on a compact hypergroup $K$, then
	each $\sigma$-harmonic continuous function on $K$ is constant.
\end{theorem}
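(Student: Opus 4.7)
The plan is to combine a Ces\`aro averaging argument with the structure theorem for idempotent probability measures on a compact hypergroup. Let $f\in C(K)$ be $\sigma$-harmonic; without loss of generality I may take $f$ real-valued. First I would form the Ces\`aro means $\sigma_n:=\frac{1}{n}\sum_{k=1}^{n}\sigma^k\in M(K)$. Since $K$ is compact, the unit ball of $M(K)$ is weak$^*$ compact, so $(\sigma_n)$ has a weak$^*$ cluster point $\sigma_0$, again a probability measure on $K$. The identity $\sigma*\sigma_n-\sigma_n=\frac{1}{n}(\sigma^{n+1}-\sigma)$ passes to the weak$^*$ limit to give $\sigma*\sigma_0=\sigma_0$, and passing to weak$^*$ limits in both factors of $\sigma_n*\sigma_n$ yields $\sigma_0*\sigma_0=\sigma_0$. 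Moreover, $\sigma^k*f=f$ for every $k$, so $\sigma_n*f=f$; for fixed $x\in K$ the map $y\mapsto f(\check y*x)$ is bounded and continuous, so $(\sigma_0*f)(x)=\lim(\sigma_n*f)(x)=f(x)$ along the cluster subnet, whence $\sigma_0*f=f$.

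Next I would invoke the classical structure theorem for idempotent probability measures on a compact hypergroup (the hypergroup analogue of Kawada--It\^o, found in \cite{bloom}): any such measure is the normalized Haar measure of a uniquely determined closed sub-hypergroup $H\subseteq K$. Applied to $\sigma_0$, this identifies $H:=\mathrm{supp}\,\sigma_0$ as a closed sub-hypergroup with $\sigma_0$ its normalized Haar measure. From $\sigma*\sigma_0=\sigma_0$ together with $e\in\mathrm{supp}\,\sigma_0$ (a standard property of idempotents) it follows that $\mathrm{supp}\,\sigma\subseteq H$; since every closed sub-hypergroup is involution-stable, $(\mathrm{supp}\,\sigma)\check{}\subseteq H$ as well. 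Hence $(\mathrm{supp}\,\sigma\cup(\mathrm{supp}\,\sigma)\check{})^n\subseteq H$ for every $n$, and adaptedness of $\sigma$ forces $H=K$ and $\sigma_0=\omega$, the normalized Haar measure on $K$.

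Finally, since Haar measure on a compact hypergroup is bi-invariant and invariant under involution, for each $x\in K$,
\[
(\omega*f)(x)=\int_K f(\check y*x)\,d\omega(y)=\int_K f(y*x)\,d\omega(y)=\int_K f\,d\omega,
\]
a constant independent of $x$. Combined with $f=\sigma_0*f=\omega*f$, this shows that $f$ is constant.

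The step I expect to be the main obstacle is the second paragraph: the appeal to the structural classification of idempotent probability measures on compact hypergroups as normalized Haar measures of closed sub-hypergroups, together with the bi-invariance and involution-invariance of Haar measure. These results lie outside the material recalled in the excerpt and rely on nontrivial compact-hypergroup machinery. A secondary technicality is justifying $\sigma_0*\sigma_0=\sigma_0$ by passing to weak$^*$ limits in both factors of $\sigma_n*\sigma_n$, which requires joint weak$^*$ continuity of convolution of probability measures on the compact space $K$.
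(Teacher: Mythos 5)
The paper itself contains no proof of this statement: Theorem \ref{ami} is imported verbatim from \cite[Theorem 3.8]{ac}, so there is no internal argument to compare yours with. Your proof is a correct, essentially self-contained argument along classical Kawada--It\^o lines, and the ingredients you single out as the ``main obstacle'' are indeed standard compact-hypergroup facts available in \cite{bloom} (and in Jewett's foundational paper): an idempotent probability measure on a compact hypergroup is the normalized Haar measure of its support, which is a compact subhypergroup (hence contains $e$ and is stable under the involution), and the normalized Haar measure of a compact hypergroup is two-sided invariant and involution-invariant, so $\omega*f=\bigl(\int_K f\,d\omega\bigr)1$. The one step I would rephrase is the idempotency of $\sigma_0$, where you do not need (and should not invoke) joint weak$^*$ continuity of convolution: from $\sigma*\sigma_0=\sigma_0$ you get $\sigma^k*\sigma_0=\sigma_0$ for all $k$, hence $\sigma_n*\sigma_0=\sigma_0$ for all $n$, and the map $\mu\mapsto\mu*\sigma_0$ is weak$^*$--weak$^*$ continuous on $M(K)$ because $\langle\mu*\sigma_0,f\rangle=\langle\mu,f*\check{\sigma_0}\rangle$ with $f*\check{\sigma_0}\in C(K)$ when $K$ is compact; passing to the limit along the cluster subnet in the left factor alone gives $\sigma_0*\sigma_0=\sigma_0$. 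With that minor repair the argument is complete, and it correctly exploits adaptedness (as opposed to non-degeneracy): the subhypergroup $H=\mathrm{supp}\,\sigma_0$ absorbs $\mathrm{supp}\,\sigma$ together with its image under the involution, forcing $H=K$, $\sigma_0=\omega$, and finally $f=\sigma_0*f=\omega*f$ constant. So your proposal supplies a legitimate proof of a result the paper only cites; whether it coincides with the argument of \cite{ac} cannot be judged from the text given here.
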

\begin{theorem}\label{comp}
	Let $K$ be a compact hypergroup and let $\sigma$ be an adapted probability
	measure on $K$. Then for $1\leq p\leq\infty$, we have $H_\sigma^p(K)=\mathbb{ C}1$.
\end{theorem}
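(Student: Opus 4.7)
The plan is to treat the boundary case $p=\infty$ first, then to reduce every $1\le p<\infty$ to $p=1$ by exploiting compactness of $K$.

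For $p=\infty$, I would combine the preceding lemma on weak$^*$ density of $H_\sigma^\infty(K)\cap LUC(K)$ in $H_\sigma^\infty(K)$ with Theorem \ref{ami}. Since $K$ is compact, $LUC(K)\subseteq C(K)$, so each element of $H_\sigma^\infty(K)\cap LUC(K)$ is a continuous $\sigma$-harmonic function and is therefore constant by Theorem \ref{ami}. Weak$^*$ density then forces $H_\sigma^\infty(K)=\mathbb{C}1$.

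For $1\le p<\infty$, compactness of $K$ gives $\omega(K)<\infty$, whence $L^p(K)\subseteq L^1(K)$ and so $H_\sigma^p(K)\subseteq H_\sigma^1(K)$; it thus suffices to prove $H_\sigma^1(K)\subseteq\mathbb{C}1$. Fix $f\in H_\sigma^1(K)$. For each $h\in C(K)$, I would argue that
\[
(f*h)(x)=\int_K f(y)\,h(\check y*x)\,d\omega(y)
\]
is continuous on $K$ (by dominated convergence, since weak continuity of hypergroup convolution implies that $x\mapsto h(\check y*x)$ is continuous for each fixed $y$, with the uniform bound $\|h\|_\infty$) and $\sigma$-harmonic, because $\sigma*(f*h)=(\sigma*f)*h=f*h$. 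Theorem \ref{ami} then forces $f*h$ to be a constant function on $K$.

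The final step, which I expect to be the main obstacle, is to lift constancy of $f*h$ for every $h\in C(K)$ to constancy of $f$ itself. Writing $(f*h)(x)=(f*h)(e)$, substituting $g=\check h$ and applying Fubini, I would obtain
\[
\int_K g\,d\bigl(\delta_x*(f\omega)\bigr)=\int_K g\,d(f\omega)\qquad\text{for all }g\in C(K),
\]
so that $\delta_x*(f\omega)=f\omega$ as complex Radon measures, for every $x\in K$. Thus $f\omega$ is a left-invariant complex Radon measure on the compact hypergroup $K$, and by uniqueness of the Haar measure (which on a compact hypergroup is automatically both left- and right-invariant), $f\omega=c\,\omega$ with $c=\omega(K)^{-1}\int_K f\,d\omega$. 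Hence $f=c$ almost everywhere, completing the proof.
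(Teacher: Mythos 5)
Your argument is correct in substance but takes a genuinely different route from the paper at the decisive step. The paper also reduces everything to $p=1$ via $L^\infty(K)\subseteq L^p(K)\subseteq L^1(K)$ on a compact hypergroup (so your separate treatment of $p=\infty$ through the $LUC$-density lemma is not needed), but then it argues more directly: it convolves $f\in H^1_\sigma(K)$ with a bounded approximate identity $(\phi_\alpha)$ of continuous compactly supported functions, so that each $f*\phi_\alpha$ is bounded, continuous and $\sigma$-harmonic, hence constant by Theorem \ref{ami}, and $f*\phi_\alpha\to f$ in $L^1(K)$ forces $f$ to be constant. You instead convolve with every $h\in C(K)$, get constancy of $f*h$, and convert this into invariance of the measure $f\omega$. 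That works: note that $f*h=(f\omega)*h$, so its continuity is exactly the fact $\mu*h\in C_b(K)$ quoted in the preliminaries (cleaner than a dominated-convergence argument over nets when $K$ need not be metrizable), harmonicity follows from $\sigma*((f\omega)*h)=(\sigma*(f\omega))*h=((\sigma*f)\omega)*h$, and the identity $h(\check y*x)=\check h(\check x*y)$ turns constancy of $f*h$ for all $h\in C(K)$ into $\delta_{\check x}*(f\omega)=f\omega$, hence $\delta_x*(f\omega)=f\omega$ for all $x\in K$ since the involution is onto. Your route is longer than the paper's but reaches the same conclusion.

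The one step you must tighten is the appeal to ``uniqueness of the Haar measure'': the uniqueness theorem is a statement about \emph{positive} left-invariant measures, whereas $f\omega$ is a complex measure, so it does not apply verbatim. The patch is short: from $\delta_x*(f\omega)=f\omega$ for all $x\in K$ and Fubini, $f\omega=\omega*(f\omega)=(\omega*f)\,\omega$, and since on a compact hypergroup the normalized Haar measure satisfies $\check\omega=\omega$ and is right-invariant, $(\omega*f)(x)=\int_K f(\check y*x)\,d\omega(y)=\int_K f\,d\omega$ for all $x$; hence $f\omega=\bigl(\int_K f\,d\omega\bigr)\omega$ and $f$ is constant almost everywhere. (Alternatively, first reduce to non-negative $f$ in the spirit of Propositions \ref{pos} and \ref{pos measure} and then quote Haar uniqueness, which is how the paper handles the analogous point in Theorem \ref{3.18}.) With that adjustment your proof is complete.
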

\begin{proof}
	Let $K$ be compact. Then we have $L^\infty(K)\subseteq L^p(K)\subseteq L^1(K)$ for all $1\leq p\leq\infty$. Thus, it suffices to prove the assertion for the case $p=1$. Let $f\in{H}_\sigma^1(K)$ and let  $(\phi_\alpha)$ be a bounded approximate identity for $L^1(K)$ such that $\phi_\alpha$ is a bounded continuous function with compact support for all $\alpha$; see \cite[Theorem 1.6.15]{bloom}. Then \cite[Lemma 2.2(i)]{skan} implies that $f*\phi_\alpha$ is continuous and bounded for all $\alpha$. Moreover, it is clear that $f*\phi_\alpha$ is $\sigma$-harmonic, and is therefore constant by Theorem \ref{ami}. This shows that $f$ is also constant, as desired.
\end{proof}

\begin{corollary}
	Let   $K$ be a  compact hypergroup and let $\sigma$ be a non-degenerate probability measure on $K$. Then   any weak$^*$ cluster point $\sigma_0$ of the sequence 
	$\left(\frac{1}{n}\sum_{k=1}^n \sigma^k\right)$ in $M(K)$ is the normalized Haar measure on $K$.
\end{corollary}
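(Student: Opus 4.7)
The plan is to show that any weak$^*$ cluster point $\sigma_0$ is a left-invariant probability measure, from which the uniqueness of the normalized Haar measure on a compact hypergroup immediately concludes. Writing $\mu_n = \frac{1}{n}\sum_{k=1}^n \sigma^k$, suppose $\sigma_0$ is a weak$^*$ cluster point in $M(K)$ along some subnet. Because $K$ is compact we have $1\in C(K)$, and testing the convergence against $1$ gives $\sigma_0(K)=1$, so $\sigma_0$ is a probability measure. The telescoping identity $\sigma*\mu_n = \mu_n + \tfrac{1}{n}(\sigma^{n+1}-\sigma)$, whose second summand has norm at most $2/n$, together with the weak$^*$ continuity of $\mu\mapsto \sigma*\mu$ (via $\langle\sigma*\mu,f\rangle=\langle\mu,\check{\sigma}*f\rangle$ with $\check{\sigma}*f\in C(K)$), yields $\sigma*\sigma_0=\sigma_0$ along the subnet.

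Next, for each $f\in C(K)=C_0(K)$, the function $\sigma_0*f$ is bounded and continuous on $K$, and associativity of the convolution gives
\[
\sigma*(\sigma_0*f) = (\sigma*\sigma_0)*f = \sigma_0*f,
\]
so $\sigma_0*f$ is a continuous $\sigma$-harmonic function. Since $\sigma$ is non-degenerate it is in particular adapted, so Theorem \ref{ami} forces $\sigma_0*f$ to be constant. This is the key reduction that leverages the machinery built up in the preceding results.

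Finally, left-invariance of $\sigma_0$ will follow from this constancy. Given $x\in K$, associativity and the fact that $\delta_x*c=c$ for any constant $c$ yield
\[
(\delta_x*\sigma_0)*f = \delta_x*(\sigma_0*f) = \sigma_0*f \qquad (f\in C(K)).
\]
Evaluating both sides at $e$ and using $(\mu*f)(e)=\langle\check{\mu},f\rangle$, we obtain $\langle(\delta_x*\sigma_0)\check{\phantom{x}},f\rangle=\langle\check{\sigma_0},f\rangle$ for every $f\in C(K)$, so the Riesz representation theorem gives $\delta_x*\sigma_0=\sigma_0$ for all $x\in K$. Thus $\sigma_0$ is a left-invariant probability measure on the compact hypergroup $K$, and by uniqueness of the normalized Haar measure we conclude $\sigma_0=\omega/\omega(K)$. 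The only point requiring care is the passage $\sigma*\mu_n\to\sigma*\sigma_0$ along the cluster-point subnet, which is handled by the weak$^*$ continuity remark above; every other step is either algebraic (associativity) or a direct application of Theorem \ref{ami}.
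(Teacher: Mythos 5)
Your argument is correct, and its core reduction is the same as the paper's: from $\sigma*\sigma_0=\sigma_0$ one gets that $\sigma_0*f$ is a continuous $\sigma$-harmonic function for every $f\in C(K)$, hence constant by Theorem \ref{ami} (non-degenerate indeed implies adapted). Where you diverge is the endgame. The paper stays self-contained at this point: writing $\sigma_0*f=\lambda_f 1$, it pairs with the normalized Haar measure $\omega$ (using left invariance of $\omega$) to get $\langle\omega,f\rangle=\lambda_f$, and pairs with $\check{\sigma}$ (using $\check{\sigma_0}*\check{\sigma}=\check{\sigma_0}$) to get $\langle\check{\sigma_0},f\rangle=\lambda_f$, whence $\check{\sigma_0}=\omega=\check{\omega}$ and $\sigma_0=\omega$. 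You instead upgrade constancy of $\sigma_0*f$ to full left invariance $\delta_x*\sigma_0=\sigma_0$ for all $x\in K$ (via $(\delta_x*\sigma_0)*f=\delta_x*(\sigma_0*f)=\sigma_0*f$ and evaluation at $e$), and then quote uniqueness of the normalized Haar measure on a compact hypergroup. That uniqueness statement is an external input (from \cite{bloom}), though it is in the same spirit as what the paper itself invokes in its Theorem \ref{3.18}; the paper's computation avoids it at the cost of the slightly less transparent $\check{\sigma_0}$ manipulation. A small plus of your write-up is that you actually verify $\sigma*\sigma_0=\sigma_0$ via the telescoping identity and weak$^*$ continuity of $\mu\mapsto\sigma*\mu$ on the compact $K$, which the paper merely asserts. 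Two cosmetic points: when concluding that $\sigma_0$ is a probability measure you should also note positivity (immediate, since $\langle\mu_{n},f\rangle\geq 0$ for $f\geq 0$ passes to the weak$^*$ cluster point), and the uniqueness appeal needs $\sigma_0$ positive and non-zero, which this supplies; neither is a genuine gap.
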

\begin{proof}
	Let $\sigma_0$ be a weak$^*$ cluster point of $(\frac{1}{n}\sum_{k=1}^n \sigma^k)$ in $M(K)$. Then we have $\sigma_0$ is an idempotent probability measure on $K$ satisfying $\sigma*\sigma_0=\sigma_0$, and therefore $\sigma_0*f\in {H}_{\sigma}^\infty(K)$ for all $f\in C_b(K)$. It follows from Theorem \ref{ami} and the non-degeneracy of $\sigma$ that for each $f\in C_b(K)$ there exists 
	$\lambda_f\in { C}$ such that $\sigma_0*f=\lambda_f 1$. Let $\omega$ be the normalized Haar measure on $K$. Then  for each $f\in C_b(K)$,
	$$
	\langle \omega, f\rangle=\int_K f(x) d\omega(x)=\int_K(\sigma_0*f)(x)d\omega(x)=\lambda_f.
	$$ 
	Moreover, 
	\begin{eqnarray*}
		\langle \check{\sigma_0}, f\rangle=\langle \check{\sigma_0}*\check{\sigma}, f\rangle=\langle \check{\sigma}, \sigma_0*f\rangle=
		\langle \check{\sigma}, \lambda_f1\rangle=\lambda_f.
	\end{eqnarray*}
	This shows that $\check{\sigma_0}=\omega$. Since $\check{\omega}=\omega$, we conclude that $\sigma_0=\omega$.
\end{proof}

Let $\sigma\in M(K)$. We say that a
measure $\mu\in M(K)$ is $\sigma$-harmonic if it satisfies the convolution equation
$\sigma*\mu=\mu$. Define ${H}_\sigma(K)$ to be the set of all ${\sigma}$-harmonic measures.

\begin{theorem}
	Let $\sigma\in M(K)$ with $\|\sigma\|=1$. Then there is a contractive projection $P_\sigma: M(K) \rightarrow M(K)$ with $P_\sigma(M(K))={H}_\sigma(K)$.
	
\end{theorem}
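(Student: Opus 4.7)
The plan is to mimic the construction used in Lemma~\ref{th0}, but now in the dual pairing $M(K)=C_0(K)^*$ instead of in an $L^p$ space. Fix a free ultrafilter $\mathcal U$ on $\mathbb N$ and define
$$
P_\sigma(\mu)=\lim_{\mathcal U}\frac{1}{n}\sum_{k=1}^{n}\sigma^k*\mu,\qquad \mu\in M(K),
$$
where the limit is taken in the weak$^*$ topology of $M(K)$. Since $\|\sigma^k*\mu\|\le\|\sigma\|^k\|\mu\|=\|\mu\|$, every Cesàro average $\nu_n:=\tfrac{1}{n}\sum_{k=1}^{n}\sigma^k*\mu$ lies in the closed ball of radius $\|\mu\|$ in $M(K)$, which is weak$^*$-compact by the Banach--Alaoglu theorem. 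The ultrafilter limit therefore exists inside this ball, and weak$^*$-lower semicontinuity of the norm gives $\|P_\sigma(\mu)\|\le\|\mu\|$, so $P_\sigma$ is contractive.

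For the idempotence and the identification of the range, I would first observe that if $\mu\in H_\sigma(K)$ then $\sigma^k*\mu=\mu$ for every $k$, so each $\nu_n=\mu$ and thus $P_\sigma(\mu)=\mu$. This simultaneously gives $H_\sigma(K)\subseteq P_\sigma(M(K))$ and yields $P_\sigma^2=P_\sigma$ once the reverse inclusion is shown. For the reverse inclusion, the key telescoping identity is
$$
\sigma*\nu_n-\nu_n=\tfrac{1}{n}\bigl(\sigma^{n+1}*\mu-\sigma*\mu\bigr),
$$
whose norm is at most $\tfrac{2}{n}\|\mu\|\to0$. Testing against any $f\in C_0(K)$ and applying the adjoint formula $\langle\sigma*\nu,f\rangle=\langle\nu,\check{\sigma}*f\rangle$ recalled in the preliminaries, one obtains
$$
\langle\sigma*P_\sigma(\mu)-P_\sigma(\mu),f\rangle=\lim_{\mathcal U}\tfrac{1}{n}\langle\sigma^{n+1}*\mu-\sigma*\mu,f\rangle=0,
$$
so that $\sigma*P_\sigma(\mu)=P_\sigma(\mu)$, i.e.\ $P_\sigma(\mu)\in H_\sigma(K)$.

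The one delicate point I expect to be the main, though mild, obstacle is the right to pull $\check{\sigma}*f$ inside the weak$^*$ limit defining $P_\sigma(\mu)$: the preliminaries only guarantee $\check{\sigma}*f\in C_b(K)$, whereas to pair it with $P_\sigma(\mu)\in C_0(K)^*$ via the weak$^*$ convergence of the averages one needs $\check{\sigma}*f$ to lie in the predual $C_0(K)$. This is resolved by the standard fact that convolution by a bounded measure sends $C_0(K)$ into itself on a locally compact hypergroup with Haar measure (see \cite{bloom}); with this fact the adjoint manipulation is legitimate and the whole argument goes through, establishing $P_\sigma(M(K))=H_\sigma(K)$.
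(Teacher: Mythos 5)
Your construction is exactly the one the paper uses: the same ultrafilter Cesàro-average weak$^*$ limit, with contractivity from Banach--Alaoglu and norm lower semicontinuity, fixed points of $H_\sigma(K)$, and the telescoping estimate to get $\sigma*P_\sigma(\mu)=P_\sigma(\mu)$ (the paper simply states these verifications as routine). Your extra care about $\check{\sigma}*f$ lying in $C_0(K)$, settled by the standard fact that convolution by a bounded measure preserves $C_0(K)$, only fills in a detail the paper leaves implicit, so the two proofs are essentially identical.
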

\begin{proof}
	Let $\mathcal U$ be a free ultra-filter on $\mathbb{N}$, and define 
	$
	P_\sigma: M(K) \rightarrow M(K)
	$
	by the weak$^*$ limit
	$$
	P_\sigma(\mu)=\lim_{\mathcal U}\frac{1}{n}\sum_{k=1}^n \sigma^k*\mu.
	$$
	It is easy to see that $P_\sigma(\mu)=\mu$ for all $\mu\in {H}_\sigma(K)$. Moreover, if $\mu\in M(K)$, then it is easily verified that $\sigma*P_\sigma(\mu)=P_\sigma(\mu)$ and hence $P_\sigma(\mu)\in {H}_\sigma(K)$. These show that $P_\sigma^2=P_\sigma$ and $P_\sigma(M(K))={H}_\sigma(K)$.
\end{proof}
Recall that a measure $\mu\in M(K)$ is non-negative if $\langle \mu, f\rangle\geq 0$ for all $f\in C_0(K)^+$.
\begin{proposition}\label{pos measure}
	Let $\sigma$ be a probability measure on $K$. Then  ${H}_\sigma(K)$ is generated by its non-negative elements.
\end{proposition}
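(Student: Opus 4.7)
The plan is to mirror almost verbatim the proof of Proposition \ref{pos}, only replacing functions by measures and the $L^p$-projection by the projection on $M(K)$ built in the previous theorem. Write $P_\sigma : M(K) \to H_\sigma(K)$ for that projection, given by the weak$^*$-ultrafilter limit of the Cesàro means of $\sigma^k \ast \mu$.

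First I would reduce to the real case. For $\mu \in M(K)$ let $\bar\mu$ denote the complex-conjugate measure, defined by $\langle \bar\mu, f\rangle = \overline{\langle \mu, \bar f\rangle}$ for $f \in C_0(K)$. Because $\sigma$ is a probability measure, and hence real, a direct unravelling of the defining integrals gives $\sigma \ast \bar\mu = \overline{\sigma \ast \mu}$. Therefore if $\mu \in H_\sigma(K)$ then $\sigma \ast \bar\mu = \bar\mu$, so $\bar\mu \in H_\sigma(K)$, and consequently the real and imaginary parts $\mathrm{Re}\,\mu = \tfrac{1}{2}(\mu+\bar\mu)$ and $\mathrm{Im}\,\mu = \tfrac{1}{2i}(\mu-\bar\mu)$ both lie in $H_\sigma(K)$. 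This shows that $H_\sigma(K)$ is generated by its real elements, so I may restrict attention to a real $\sigma$-harmonic measure $\mu$.

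Next I would apply the Hahn--Jordan decomposition to write $\mu = \mu_+ - \mu_-$ with $\mu_+, \mu_- \in M(K)$ non-negative. These pieces are typically not $\sigma$-harmonic, but this is exactly where the projection $P_\sigma$ enters: since $\sigma$ is a probability measure, each convolution power $\sigma^k$ is non-negative, convolution of non-negative measures is non-negative, Cesàro averages preserve non-negativity, and the cone of non-negative measures in $M(K)$ is weak$^*$-closed; hence $P_\sigma$ is a positive operator. Consequently $P_\sigma(\mu_+)$ and $P_\sigma(\mu_-)$ are non-negative elements of $H_\sigma(K)$. By linearity and the fact that $\mu$ is already fixed by $P_\sigma$,
\begin{equation*}
  \mu \;=\; P_\sigma(\mu) \;=\; P_\sigma(\mu_+) - P_\sigma(\mu_-),
\end{equation*}
which exhibits $\mu$ as a difference of two non-negative members of $H_\sigma(K)$ and completes the proof.

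I do not expect a serious obstacle here; every ingredient is already in place in the excerpt. The only two points requiring a line of justification are (a) the identity $\sigma \ast \bar\mu = \overline{\sigma \ast \mu}$, which uses only that $\sigma$ is a real measure, and (b) the positivity of $P_\sigma$ on $M(K)$, which follows from the weak$^*$-closedness of the positive cone combined with the fact that each $\sigma^k \ast \mu_{\pm}$ is non-negative. With these, the argument is a direct transcription of the proof of Proposition \ref{pos}.
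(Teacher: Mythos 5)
Your argument is correct and coincides with the paper's own proof: both reduce to real measures via $\sigma*\overline{\mu}=\overline{\sigma*\mu}$, take the Jordan decomposition $\mu=\mu_+-\mu_-$, and use positivity of the projection $P_\sigma$ (coming from positivity of $\sigma$ and weak$^*$-closedness of the positive cone) to write $\mu=P_\sigma(\mu_+)-P_\sigma(\mu_-)$. Your two explicit justifications (a) and (b) are precisely the points the paper leaves implicit, so nothing is missing.
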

\begin{proof}
	Suppose that  $\mu\in {H}_\sigma(K)$. Then $\sigma*\overline{\mu}=\overline{(\sigma*\mu)}=\overline{\mu}$. This shows that ${H}_\sigma(K)$	is generated by its real measure parts. Now let $\mu\in {H}_\sigma(K)$ be a real measure and let $\mu=\mu_+-\mu_-$, where $\mu_+, \mu_-$ are non-negative measures  in $M(K)$. Since $\sigma$ is positive, the measures $P_\sigma(\mu_+), P_\sigma(\mu_-)\in {H}_\sigma(K)$ are non-negative. Moreover, $\mu=P_\sigma(\mu)=P_\sigma(\mu_+)-P_\sigma(\mu_-)$, which completes the proof.
\end{proof}

\begin{theorem}\label{3.18}
	Let $K$ be non-compact and let $\sigma$ be a non-degenerate probability measure on $K$. Then we have ${H}_\sigma(K)=\{0\}$.
\end{theorem}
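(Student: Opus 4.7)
My plan is to mirror the strategy of the preceding $L^p$-result: reduce via Proposition \ref{pos measure} to non-negative $\sigma$-harmonic measures, then manufacture a bounded continuous $\sigma$-harmonic function on $K$ that vanishes at infinity yet is non-zero, contradicting Theorem \ref{3.6}. Accordingly, I would assume for contradiction that some $\mu\in H_\sigma(K)$ is non-negative and non-zero, normalized so that $\|\mu\|=1$.

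Since $M(K)=C_0(K)^*$ and $\mu\neq 0$, I pick $f\in C_c(K)^+$ with $\|f\|_\infty\leq 1$ and $\int_K f\,d\mu>0$, and set $h:=\mu*\check f$. From $\sigma*\mu=\mu$ we obtain
$$\sigma*h=(\sigma*\mu)*\check f=\mu*\check f=h,$$
so $h\in H_\sigma^\infty(K)$. The standard continuity properties of hypergroup convolution place $h\in C_b(K)$ with $\|h\|_\infty\leq 1$, and evaluating at the identity gives $h(e)=\int_K\check f(\check y)\,d\mu(y)=\int_K f(y)\,d\mu(y)=\langle\mu,f\rangle>0$.

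The heart of the argument is to verify $h\in C_0(K)$. Given $\epsilon>0$, I invoke tightness of the finite Radon measure $\mu$ to choose a compact $C\subseteq K$ with $\mu(K\setminus C)<\epsilon$. Writing $K_f={\rm supp}(f)$ and using the hypergroup reciprocity $t\in\check y*x\Leftrightarrow x\in y*t$ (a consequence of the involution axiom), if $y\in C$ and $\check f(\check y*x)\neq 0$ then there is $t\in\check K_f$ with $t\in\check y*x$, so $x\in y*t\subseteq C*\check K_f$. The set $C*\check K_f$ is compact by axiom (ii) together with the standard fact that a continuously parametrised family of compacta has compact union in the Michael topology. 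Hence for $x\notin C*\check K_f$,
$$h(x)=\int_{K\setminus C}\check f(\check y*x)\,d\mu(y),$$
which is bounded by $\|f\|_\infty\mu(K\setminus C)<\epsilon$, so $h\in C_0(K)$. Now $h\in H_\sigma^\infty(K)\cap C_0(K)$, so Theorem \ref{3.6} forces $h$ to be constant; since $K$ is non-compact the only constant in $C_0(K)$ is $0$, contradicting $h(e)>0$. Proposition \ref{pos measure} then delivers $H_\sigma(K)=\{0\}$.

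The main obstacle I anticipate is this $C_0$-step---in particular the compactness of $C*\check K_f$, which rests on axiom (ii) and properties of the Michael topology---since once $h$ is known to lie in $C_0(K)$ the conclusion follows immediately from Theorem \ref{3.6}. An alternative route would be to observe that for any $g\in C_c(K)$ the function $\mu*g$ lies in $H_\sigma^2(K)$ (by Young's inequality applied to $\mu\in M(K)$ and $g\in L^2(K)$) and is therefore zero by the previous theorem, and then to extract $\mu=0$ by testing against an approximate identity; the direct route above is preferred, though, because it makes transparent that the measure case genuinely reduces to the $C_0$-version rather than to the $L^p$-version.
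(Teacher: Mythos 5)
Your argument is correct, but it follows a genuinely different route from the paper. You reduce to a non-negative $\mu\in H_\sigma(K)$ (as the paper also does, via Proposition \ref{pos measure}) and then transfer the problem to functions: you form $h=\mu*\check f$ with $f\in C_c(K)^+$, check $\sigma*h=h$ by associativity, show $h\in C_0(K)$ using tightness of $\mu$, the support reciprocity $t\in\mathrm{supp}(\delta_{\check y}*\delta_x)\Leftrightarrow x\in\mathrm{supp}(\delta_y*\delta_t)$, and compactness of $C*\check K_f$ (all standard facts in Bloom--Heyer), and then invoke Theorem \ref{3.6} to force $h\equiv 0$, contradicting $h(e)=\langle\mu,f\rangle>0$. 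The paper instead argues directly at the level of measures: setting $\sigma_0=\sum_{n\ge 1}2^{-n}\sigma^n$, it has $\sigma_0*\mu=\mu$, and then cites the Choquet--Deny-type result \cite[Theorem 1.6.9]{bloom} to conclude $\delta_x*\mu=\mu$ for all $x\in\mathrm{supp}\,\sigma_0=K$ (by non-degeneracy), so $\mu$ would be a finite left Haar measure on a non-compact hypergroup, which is impossible. Your approach buys self-containedness within the paper (it literally exhibits the measure case as a corollary of the $C_0$-function Theorem \ref{3.6}), at the price of verifying the structural convolution/support lemmas in the $C_0$-step; the paper's approach is shorter and avoids those verifications by outsourcing the key invariance step to the quoted theorem, but it additionally relies on uniqueness/unboundedness properties of Haar measure rather than on the harmonic-function machinery developed in Section 4. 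Both proofs are valid; your alternative sketch via $\mu*g\in H_\sigma^2(K)$ would also work in outline, though the final extraction of $\mu=0$ from an approximate identity would need the same kind of convergence care you wisely sidestepped in the main argument.
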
  
\begin{proof}
	Suppose that $\mu\in H_{\sigma}(K)$ is non-zero. By Proposition \ref{pos measure}, we can assume that $\mu$ is positive. Consider the probability measure $\sigma_0$ defined by
	$\sigma_0:=\sum_{n=1}^\infty\frac{1}{2^n}\sigma^n$. It is clear that $\sigma_{0}\ast\mu=\mu$. By \cite[Theorem 1.6.9]{bloom}, we have that $\delta_{x}\ast\mu=\mu$ for all $x\in {\rm supp}\sigma_{0}=K $. This shows that $\mu$ is the left Haar measure that is finite on $K$, which is a contradiction with non-compactness of $K$. 
	
\end{proof}
\begin{corollary}
	Let $K$ be non-compact and let $\sigma$ be a non-degenerate probability measure on $K$. Then we have ${H}_\sigma^1(K)=\{0\}$.
\end{corollary}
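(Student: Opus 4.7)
The plan is to reduce the statement to Theorem \ref{3.18} by embedding $L^1(K)$ into $M(K)$ via the Haar measure. Given $f \in H_\sigma^1(K)$, I would associate the absolutely continuous complex measure $\mu_f \in M(K)$ determined by $d\mu_f = f\, d\omega$, so that $\|\mu_f\| = \|f\|_1$ and $\langle \mu_f, h\rangle = \int_K f h \, d\omega$ for every $h \in C_0(K)$.

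The crux of the argument is the compatibility identity $\sigma*(f\omega) = (\sigma*f)\omega$ as elements of $M(K)$, where on the left the convolution is the measure convolution defined in Section 2 and on the right $\sigma*f$ is the $L^1$-function defined by $(\sigma*f)(x) = \int_K f(\check{y}*x)\,d\sigma(y)$. I would verify this by pairing both sides with an arbitrary $h \in C_0(K)$: starting from $\langle \sigma*(f\omega), h\rangle = \langle f\omega, \check{\sigma}*h\rangle = \int_K f(x)(\check{\sigma}*h)(x)\,d\omega(x)$, unravel $(\check{\sigma}*h)(x) = \int_K h(y*x)\, d\sigma(y)$, and apply left-invariance $\delta_y*\omega = \omega$ in the form
\[
\int_K f(x) h(y*x)\, d\omega(x) = \int_K f(\check{y}*x) h(x)\, d\omega(x)
\]
to rewrite everything as $\int_K (\sigma*f)(x) h(x)\, d\omega(x) = \langle (\sigma*f)\omega, h\rangle$. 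This is a standard property of Haar convolution on hypergroups and can alternatively be invoked from Bloom--Heyer; either way it is routine bookkeeping rather than deep mathematics.

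With the identity in hand, $f \in H_\sigma^1(K)$ yields $\sigma*\mu_f = (\sigma*f)\omega = f\omega = \mu_f$, so $\mu_f \in H_\sigma(K)$. Theorem \ref{3.18} then forces $\mu_f = 0$, hence $f = 0$ in $L^1(K)$, and therefore $H_\sigma^1(K) = \{0\}$. The main obstacle, such as it is, lies in keeping the involution and translation identities straight when verifying the compatibility between the $L^1$-convolution and the $M(K)$-convolution; no genuinely new idea is required beyond the preceding theorem.
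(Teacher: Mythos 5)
Your proposal is correct and follows essentially the same route as the paper: identify $f$ with the measure $f\omega$, use the compatibility $\sigma*(f\omega)=(\sigma*f)\omega$ (which the paper simply asserts and you verify directly via left-invariance of $\omega$), and conclude from Theorem \ref{3.18} that $f\omega=0$, hence $f=0$.
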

\begin{proof}
	Suppose that $f \in H_{\sigma}^{1}(K)$. Define $\mu:=f\omega$, where $\omega$ is the left Haar measure  on $K$. Then we have
	$$\mu=(\sigma\ast f)\omega=\sigma\ast(f\omega)=\sigma*\mu.$$ Therefore,  
	$\mu=0$ by Theorem \ref{3.18} and hence $f=0$. 
\end{proof}
\begin{theorem}
	let $K$ be compact and let $\sigma$ be an adabted probability measure on $K$. Then we have ${H}_{\sigma}(K)= \mathbb{C}\omega$, where $\omega$ is the normalized Haar measure on $K$.
\end{theorem}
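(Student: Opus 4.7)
The plan is to verify both inclusions of $H_\sigma(K) = \mathbb{C}\omega$. For the easy inclusion $\mathbb{C}\omega \subseteq H_\sigma(K)$, I would simply observe that by left-invariance of $\omega$ and $\sigma$ being a probability measure, $\sigma * \omega = \int_K \delta_x * \omega \, d\sigma(x) = \omega$, so $\omega$ is itself $\sigma$-harmonic.

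For the reverse inclusion, I would take $\mu \in H_\sigma(K)$ and, via Proposition \ref{pos measure}, reduce to the case $\mu \geq 0$. The central idea is to convert the measure equation $\sigma * \mu = \mu$ into a function equation to which Theorem \ref{comp} can be applied. Given $f \in C(K)$, set $h := \mu * f$; by the material of Section 2 this is in $C_b(K)$. Associativity of the hypergroup convolution (rooted in the involution identity $(y*z)\check{}\, = \check z * \check y$) gives $\sigma * h = \sigma * (\mu * f) = (\sigma * \mu) * f = \mu * f = h$, so $h \in H^\infty_\sigma(K)$. Theorem \ref{comp} then forces $h$ to be constant, say $h \equiv c_f$.

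The next step is to compute $c_f$ in two independent ways. Evaluation at the identity yields $c_f = h(e) = \int_K f(\check y) \, d\mu(y) = \langle \check\mu, f \rangle$. On the other hand, integrating $h$ against the normalized Haar measure $\omega$ and applying Fubini together with the left-invariance $\delta_{\check y} * \omega = \omega$ gives
\begin{equation*}
c_f \;=\; \int_K h \, d\omega \;=\; \int_K \!\int_K f(\check y * x) \, d\omega(x) \, d\mu(y) \;=\; \mu(K) \int_K f \, d\omega.
\end{equation*}
Equating the two expressions yields $\langle \check\mu, f \rangle = \mu(K) \langle \omega, f \rangle$ for every $f \in C(K)$, hence $\check\mu = \mu(K)\,\omega$ as measures. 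Applying the involution and invoking the unimodularity of compact hypergroups ($\check\omega = \omega$, cf.\ \cite{bloom}), I conclude $\mu = \mu(K)\,\omega \in \mathbb{C}\omega$.

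The step I expect to be most delicate is the reduction to Theorem \ref{comp}: one must confirm both that $\mu * f \in C_b(K)$ and that the associativity $(\sigma*\mu)*f = \sigma*(\mu*f)$ is valid in this generality, so that $\mu*f$ is legitimately a bounded $\sigma$-harmonic function. Once these are in place, the remainder is a Fubini computation together with the standard fact $\check\omega = \omega$ for compact hypergroups.
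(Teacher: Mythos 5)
Your argument is correct, but it follows a different route from the paper's. The paper takes a compactly supported continuous bounded approximate identity $(\phi_\alpha)$ with $\phi_\alpha\to\delta_e$ weak$^*$, notes $\mu*\phi_\alpha\in H^1_\sigma(K)$, invokes Theorem \ref{comp} at $p=1$ to get $\mu*\phi_\alpha=\lambda_\alpha 1$, and then passes to the weak$^*$ limit to conclude $\mu=\lambda\omega$; it never needs the involution of $\mu$ nor the identity $\check\omega=\omega$ (though that identity is used elsewhere in the paper). You instead convolve $\mu$ with an arbitrary $f\in C(K)$, observe by associativity that $\mu*f$ is a bounded continuous $\sigma$-harmonic function, force it to be constant, and compute the constant twice (at $e$ and against $\omega$) to get $\check\mu=\mu(K)\omega$, finishing with $\check\omega=\omega$. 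What your route buys is the avoidance of approximate identities and of the weak$^*$-limit bookkeeping (the paper's proof tacitly needs the convergence of the scalars $\lambda_\alpha$, which it glosses over), and it produces the normalization $\lambda=\mu(K)$ explicitly; the cost is the extra standard input that the normalized Haar measure of a compact hypergroup satisfies $\check\omega=\omega$ \cite{bloom}. Two small points to tighten: the delicate steps you flag ($\mu*f\in C_b(K)$ and $\sigma*(\mu*f)=(\sigma*\mu)*f$) are indeed standard for bounded measures acting on $C_b(K)$ and are covered by the material quoted in Section 2 and \cite{bloom}; and since Theorem \ref{comp} is an $L^\infty$-statement, it only gives $h=c$ $\omega$-almost everywhere, whereas your evaluation $h(e)=c_f$ needs constancy everywhere --- either cite Theorem \ref{ami} directly (your $h$ is continuous and $\sigma$-harmonic, so it is constant pointwise), or note that the Haar measure has full support so a continuous function constant a.e.\ is constant. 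Also, the reduction to $\mu\geq 0$ via Proposition \ref{pos measure} is harmless but unnecessary, as your computation is linear in $\mu$.
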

\begin{proof}
	Because $K$ is compact, the Haar measure $\omega$ is in $ M(K)$ and we have $\sigma \ast \omega=\omega$. Therefore, $\mathbb{C}\omega_{K}\subseteq {H}_{\sigma}(K)$. To prove the converse, suppose that $\mu \in {H}_{\sigma}(K)$. Then $\sigma\ast\mu =\mu$. 
	Let $(\phi_{\alpha})$ be a bounded approximate identity for $L^1(K)$ such that $\phi_{\alpha} \in C_{c}^{+}(K)$ for every $\alpha$ and 
	$\phi_{\alpha}\stackrel{w^*}\longrightarrow\delta_{e}$; see   \cite[Theorem 1.6.15]{bloom}.
	Then  $\mu \ast \phi_{\alpha}\stackrel{w^*}\longrightarrow \mu$ and $\mu\ast \phi_{\alpha} \in {H}_{\sigma}^{1}(K)$ for all $\alpha$. Thus,  Theorem \ref{comp} implies that  $\mu \ast \phi_{\alpha}$
	is constant for all $\alpha$. Hence, for every $\alpha$ there is $\lambda_{\alpha} \in \mathbb{C} $ such that $\mu \ast \phi_{\alpha}=\lambda_{\alpha}{1}$.
	It follows that  for each $f \in C_b(K)$, we have
	$$\langle\mu \ast \phi_{\alpha}, f\rangle=\int_{K}\lambda_{\alpha}f d\omega=\langle \lambda_{\alpha}\omega, f\rangle.$$
	Therefore, $\langle\lambda_{\alpha}\omega_{K}, f\rangle\longrightarrow\langle \mu, f\rangle$ for all $f\in C_b(K)$. This shows that there exists $\lambda \in \mathbb{C}$ such that $ \mu=\lambda \omega$. It follows that  ${H}_{\sigma}(K)= \mathbb{C} \omega$.
\end{proof}


\bibliographystyle{amsplain}

\end{document}